\newtheorem{thm}{Theorem}
\numberwithin{thm}{section}
\numberwithin{equation}{section}
\newtheorem{theorem}[thm]{Theorem}
\newtheorem*{theorem*}{Theorem}
\newtheorem{corollary}[thm]{Corollary}
\newtheorem*{corollary*}{Corollary}
\newtheorem{lemma}[thm]{Lemma}
\newtheorem{sublemma}[thm]{Sublemma}
\newtheorem{proposition}[thm]{Proposition}
\newtheorem*{conjecture*}{Conjecture}
\newtheorem*{question*}{Question}
\newtheorem{definition}[thm]{Definition}
\newtheorem*{definition*}{Definition}
\newtheorem*{definitions*}{Definitions}
\newtheorem*{rem*}{Remark}
\theoremstyle{remark}
\newtheorem{remark}[thm]{Remark}
\newtheorem*{remark*}{Remark}
\newtheorem*{remarks*}{Remarks}
\newtheorem*{example*}{Example}
\newtheorem*{examples*}{Examples}
\newcommand{\R}{\mathbb{R}}
\newcommand{\Z}{\mathbb{Z}}
\newcommand{\Q}{\mathbb{Q}}
\newcommand{\N}{\mathbb{N}}
\def\CP{{\mathbb C}{\mathbb P}}
\newcommand{\Id}{\mathit{Id}}
\newcommand{\ep}{\epsilon}
\newcommand{\ga}{\gamma}
\newcommand{\Ga}{\Gamma}
\def\lg{\langle}
\def\rg{\rangle}
\def\cz{{\mu_\text{CZ}}}
\def\rs{{\mu_\text{RS}}}
\def\maslov{{\mu_\text{Maslov}}}
\def\czl{{\mu^-_\text{CZ}}}
\def\czu{{\mu^+_\text{CZ}}}
\def\vr{\varphi}
\def\om{\omega}
\def\pr{\prime}
\def\P{{\mathcal P}}
\def\lo{{\mu^-_\text{CZ}}}
\def\mper{{{\mathfrak p}}}
\def\mperj{{{\mathfrak p}_j}}
\def\Pj{{P_{\gamma_j}}}
\def\Bott{{\mathfrak B}}
\def\q{{\mathfrak q}}
\newcommand{\Pp}{\mathcal{P}}
\newcommand{\tgamma}{\tilde{\gamma}}
\begin{document}
\title{Multiplicity of periodic orbits for dynamically convex contact forms}

\author[M.~Abreu]{Miguel Abreu}
\address{Center for Mathematical Analysis, Geometry and Dynamical Systems,
Instituto Superior T\'ecnico, Universidade de Lisboa, 
Av. Rovisco Pais, 1049-001 Lisboa, Portugal}
\email{mabreu@math.tecnico.ulisboa.pt}
 
\author[L.~Macarini]{Leonardo Macarini}
\address{Universidade Federal do Rio de Janeiro, Instituto de Matem\'atica,
Cidade Universit\'aria, CEP 21941-909 - Rio de Janeiro - Brazil}
\email{leomacarini@gmail.com}

\thanks{MA was partially funded by FCT/Portugal through UID/MAT/04459/2013 and 
project EXCL/MAT-GEO/0222/2012, and by CNPq/Brazil through a visiting grant. 
LM was partially supported by CNPq/Brazil and by FCT/Portugal through a visiting grant.
The present work is part of the authors activities within BREUDS, a research partnership 
between European and Brazilian research groups in dynamical systems,
supported by an FP7 International Research Staff Exchange Scheme (IRSES) 
grant of the European Union.}

\dedicatory{This paper is dedicated to Professor Paul H. Rabinowitz.}

\begin{abstract}
We give a sharp lower bound for the number of geometrically distinct contractible periodic orbits of dynamically convex Reeb flows on prequantizations of symplectic manifolds that are not aspherical.  Several consequences of this result are obtained, like a new proof that every bumpy Finsler metric on $S^n$ carries at least two prime closed geodesics, multiplicity of elliptic and non-hyperbolic periodic orbits for dynamically convex contact forms with finitely many geometrically distinct contractible closed orbits and precise estimates of the number of even periodic orbits of perfect contact forms. We also slightly relax the hypothesis of dynamical convexity. A fundamental ingredient in our proofs is the common index jump theorem due to Y. Long and C. Zhu.
\end{abstract}

\maketitle

\section{Introduction}

The problem of the existence of periodic orbits on energy levels of Hamiltonian flows is a long-standing question in Hamiltonian Dynamics. This problem traces back to Poincar\'e and had a strong revival after the pioneering work of Rabinowitz \cite{Rab1,Rab2,Rab3} who proved the existence of at least one periodic orbit on every starshaped energy hypersurface in $\R^{2n}$. 

Rabinowitz's work motivated the introduction of the Weinstein conjecture \cite{Wei} which asks the existence of a periodic orbit for every Reeb flow on a closed contact manifold and is one of the most fundamental driving questions in Symplectic Topology. Although this conjecture is open in general, several partial positive results have been achieved, in particular Taubes' celebrated proof in dimension three \cite{Tau}. 

A natural problem is what is beyond the Weinstein conjecture, namely, what one can say about the number of geometrically distinct periodic orbits of Reeb flows besides the existence of just one periodic orbit. As pointed out in \cite{GG14}, two complementary classes of contact manifolds can be considered in this problem: those for which the rank of the contact or symplectic homology groups grows as a function of the degree and those that do not have such homological growth. (Naturally, we are considering here contact manifolds for which some sort of nice symplectic or contact homology can be defined.)

The first group was considered in \cite{HM, McL} where the existence of infinitely many simple closed orbits for every Reeb flow on these manifolds is proved.  The main point is that the contribution to the homology of the iterations of an isolated periodic orbit is uniformly bounded with respect to the iteration. Therefore, the growth of the homology forces the existence of infinitely many simple orbits. These results were inspired by a classical result due to Gromoll and Meyer \cite{GM} which establishes the existence of infinitely many prime closed geodesics on a closed Riemannian manifold whose rank of the homology of the underlying loop space grows with respect to the degree.

The second group is more involved since, without homological growth, we do have examples of Reeb flows with finitely many simple periodic orbits. In this situation, the multiplicity of periodic orbits resembles the Hamiltonian Conley conjecture (HCC) for Hamiltonian diffeomorphisms on closed symplectic manifolds where there is also no (Floer) homological growth. The HCC states that every Hamiltonian diffeomorphism carries infinitely many simple periodic points \emph{whenever the symplectic manifold satisfies some conditions}. This conjecture was proved by Salamon and Zehnder \cite{SZ} for weakly non-degenerate Hamiltonian diffeomorphisms on aspherical symplectic manifolds and by Hingston \cite{Hin} for general Hamiltonian diffeomorphisms on the standard symplectic torus $T^{2n}$. These achievements culminate with Ginzburg's celebrated proof \cite{Gin} of the HCC for general Hamiltonian diffeomorphisms on aspherical symplectic manifolds (for generalizations of this result see \cite{GG14} and references therein). However, the HCC easily fails if we drop the assumption of asphericity: consider, for instance, an irrational rotation on $S^2$ which has only the poles as periodic points.

Similarly, the so called contact Conley conjecture (CCC) establishes that every Reeb flow (possibly meeting some extra natural conditions) on a contact manifold has infinitely many simple closed orbits whenever the contact manifold meets some conditions. With this conjecture in mind, our second group of contact manifolds naturally splits in two subgroups: the one given by contact manifolds that satisfy the CCC and the other for which the CCC fails.

In order to study these two subgroups, let us consider a prequantization $M^{2n+1}$ of a closed symplectic manifold $(B,\om)$ with contact structure $\xi$ satisfying $c_1(\xi)=0$. This can be considered as a prototypical example of a contact manifold with no homological growth. As a matter of fact, its contact homology is given by copies of the singular homology of the basis with a shift in the degree; see \cite{Bo}.

Under some mild extra assumptions, it was proved in \cite{GGM} that the Reeb flow of an index-admissible contact form on $M$ carries infinitely many simple closed orbits whenever $\om$ vanishes on $\pi_2(B)$. (Recall that a non-degenerate contact form $\alpha$ is \emph{index--admissible} if its Reeb flow has no contractible closed orbit with Conley--Zehnder index $2-n$ or $2-n\pm 1$ (this is the natural condition to have a well defined cylindrical contact homology).  In general, $\alpha$ is index--admissible when there exists a sequence of non-degenerate index--admissible forms $C^1$-converging to $\alpha$.) This furnishes a partial positive answer to the CCC for these prequantizations, showing that the first subgroup is not empty.

On the other hand, it is easy to see that the CCC fails when we drop the assumption that $\om$ is aspherical, as the example of the irrational ellipsoid shows. More generally, every prequantization of a closed symplectic manifold admitting a Hamiltonian circle action with finitely many fixed points does not satisfy the CCC and all these symplectic manifolds are necessarily not aspherical.

The purpose of this paper is to address the multiplicity problem of closed orbits on prequantizations when the basis is \emph{not aspherical} and therefore the CCC can fail. In order to attack this problem, we will consider \emph{dynamically convex} contact forms as defined in \cite{AM} (see Definition \ref{def:dynconvex}). This definition generalizes the classical notion of convexity for spheres in $\R^{2n}$ to general contact manifolds in a homological fashion. Our main result is Theorem \ref{MainThm} which establishes a sharp lower bound for the number of geometrically distinct contractible periodic orbits of dynamically convex contact forms on suitable prequantizations. Several consequences of this result and its proof are obtained, like a new proof that every bumpy Finsler metric on $S^n$ carries at least two prime closed geodesics (Corollary \ref{cor:2orbitsFinsler}), existence and multiplicity of elliptic and non-hyperbolic periodic orbits for dynamically convex contact forms with finitely many geometrically distinct contractible closed orbits (Theorems \ref{thm:elliptic} and \ref{thm:non-hyperbolic}) and precise estimates of the number of even periodic orbits for perfect contact forms (Theorem \ref{thm:perfect}). Finally, we show in Theorem \ref{thm:weak convexity} that the hypothesis of dynamical convexity in Theorem \ref{MainThm} can be slightly relaxed.

\vskip .2cm
\noindent {\bf Organization of the paper.} The results are stated in Section \ref{sec:results}. In Section \ref{sec:CH&index} we review the basic background on index theory and contact homology used in this work. Section \ref{sec:CIJT} presents the common index jump theorem due to Long and Zhu which is a crucial tool in the proof of our main theorem, established in Section \ref{sec:proof mainthm}. Section \ref{sec:proof elliptic&hyperbolic} gives the proofs of Theorems \ref{thm:elliptic} and \ref{thm:non-hyperbolic}. Finally, Theorem \ref{thm:perfect} is proved in Section \ref{sec:proof perfect}.

\subsection*{Acknowledgements} 
We thank IMPA and IST for the warm hospitality during the preparation of this work. We are grateful to Viktor Ginzburg, Basak Gurel and Jean Gutt for useful conversations regarding this paper. Part of these results were presented by the first author at the Workshop on Conservative Dynamics and Symplectic Geometry, IMPA, Rio de Janeiro, Brazil, August 3--7, 2015 and by the second author at the Contact and Symplectic Topology Session of the AMS-EMS-SPM Meeting, Porto, Portugal, June 10--13, 2015. They thank the organizers for the opportunity to participate in such wonderful events.

\section{Statement of the results}
\label{sec:results}

\subsection{Main result}

Let $(M,\xi)$ be a contact manifold such that its first Chern class $c_1(\xi)$ vanishes as an element of $H^2(M,\R)$. Assume that $\xi$ supports an index-admissible contact form and consider the cylindrical contact homology $HC_*(\xi)$ with rational coefficients and graded by the Conley-Zehnder index. Cylindrical contact homology has a natural filtration in terms of the free homotopy classes of the periodic orbits and we will denote by $HC^a_*(\xi)$ the homology generated by closed orbits with free homotopy class $a$. Given a periodic orbit $\ga$ we denote by $\czl(\ga)$ (resp. $\czu(\ga)$) a lower (resp. upper) semicontinuous extension of the Conley-Zehnder index to degenerate periodic orbits as defined in \cite{AM}; see Section \ref{sec:CH&index}. The following definition was introduced in \cite{AM} and generalizes the notion of dynamical convexity for tight contact forms on $S^3$ introduced in \cite{HWZ}. For a better explanation and motivation of this definition, we refer to \cite{AM}. In this work, we will consider a slightly weaker definition in the non-degenerate case. Recall that a non-degenerate periodic orbit is \emph{good} if its index has the same parity of the index of the underlying simple closed orbit; see Section \ref{sec:CH}.

\begin{definition}
\label{def:dynconvex}
Let $k_- = \inf\{k \in \Z; HC_k(\xi) \neq 0\}$ and $k_+ = \sup\{k \in \Z; HC_k(\xi) \neq 0\}$. A contact form $\alpha$ is positively (resp. negatively) dynamically convex if $k_-$ is an integer and $\czl(\gamma) \geq k_-$ (resp. $k_+$ is an integer and $\czu(\gamma) \leq k_+$) for every periodic orbit $\gamma$ of $R_\alpha$. Similarly, let $a$ be a free homotopy class in $M$, $k^a_- = \inf\{k \in \Z; HC^a_k(\xi) \neq 0\}$ and $k^a_+ = \sup\{k \in \Z; HC^a_k(\xi) \neq 0\}$. A contact form $\alpha$ is positively (resp. negatively) $a$-dynamically convex if $k^a_-$ is an integer and $\czl(\gamma) \geq k^a_-$ (resp. $k^a_+$ is an integer and $\czu(\gamma) \leq k^a_+$) for every periodic orbit $\gamma$ of $R_\alpha$ with free homotopy class $a$. If $\alpha$ is non-degenerate we consider only good closed orbits.
\end{definition}

In this work we will call a contact form $\alpha$ just \emph{dynamically convex} if it is either positively or negatively dynamically convex. There are tons of examples of dynamically convex contact forms, see \cite{AM} and the discussion below (in particular, as mentioned in the introduction, contact forms induced on convex spheres in $\R^{2n}$ are dynamically convex but there are several others).

The following theorem is our main result and addresses the multiplicity problem of closed orbits for dynamically convex contact forms on prequantizations of symplectic manifolds that are not aspherical. In the statement, $c_B := \inf\{k>0;\ \exists S \in \pi_2(B)\text{ such that }\lg c_1(TB),S \rg = k\}$ denotes the minimal Chern number of the symplectic manifold $B$ and $\chi(B)$ is its Euler characteristic.

\begin{theorem}
\label{MainThm}
Let $(M^{2n+1},\xi)$ be a prequantization of a closed symplectic manifold $(B,\om)$ such that $c_1(\xi)|_{\pi_2(M)}=0$, $\om|_{\pi_2(B)}\neq 0$ and $c_B\geq n$. Let $\alpha$ be a $0$-dynamically convex contact form supporting $\xi$. Then the following assertions hold:
\begin{itemize}
\item[(A)] Suppose that $\chi(B)\neq 0$. Then the Reeb flow of $\alpha$ carries at least two geometrically distinct contractible periodic orbits.
\item[(B)] Suppose, if $c_B=n$, that $H_{k}(B;\Q)=0$ for every odd $k$. If $\alpha$ is non-degenerate then its Reeb flow carries at least $r_B$ geometrically distinct contractible periodic orbits, where $r_B$ is the total rank of $H_\ast (B;\Q)$.
\end{itemize}
\end{theorem}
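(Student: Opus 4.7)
The plan is to argue by contradiction, using the common index jump theorem (CIJT) of Long and Zhu to compare the contribution of the contractible simple closed orbits to the contractible part $HC^0_*(\xi)$ of the cylindrical contact homology, which for a prequantization is explicitly computable via Bourgeois' Morse--Bott description. Under the hypotheses $c_1(\xi)|_{\pi_2(M)}=0$ and $c_B\geq n$, the contractible orbits of the standard Boothby--Wang Reeb flow form Morse--Bott families (one per iterate of the fiber), properly separated in degree, yielding
$$HC^0_*(\xi)\iso\bigoplus_{N\geq 1}H_{*-s_N}(B;\Q),$$
with shifts $s_N$ growing linearly in $N$ with slope $2c_B$. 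In particular, in a window of width approximately $2n$ centered at $s_N+n$ the total rank equals $r_B$; when $c_B=n$ the parity hypothesis in (B) is precisely what prevents adjacent windows from interfering.

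For part (B), assume by contradiction there are only $p<r_B$ simple contractible orbits $\ga_1,\dots,\ga_p$. By $0$-dynamical convexity together with $\o|_{\pi_2(B)}\ne 0$, all the mean Conley--Zehnder indices $\Delta(\ga_i)$ are positive (otherwise iteration of a zero-mean-index orbit would eventually violate $\czl\geq k_-^0$). Applying CIJT to $\ga_1,\dots,\ga_p$ I obtain arbitrarily large $N$ and multiplicities $m_1,\dots,m_p$ with $|\cz(\ga_i^{m_i})-2N|\leq n$, while any other iterate $\ga_i^{m_i+l}$ with $l\neq 0$ small has index outside $[2N-n,2N+n]$; dynamical convexity rules out low-index iterates from entering the window from below. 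Counting good orbits, at most $p$ generators of $HC^0_*$ lie in this window, contradicting the rank $r_B$ from the Bourgeois computation.

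For part (A), I would first reduce to the non-degenerate case by a $C^\infty$-small perturbation, using lower/upper-semicontinuity of $\czl$ and $\czu$ to preserve $0$-dynamical convexity. Assuming $p=1$, the same CIJT window argument gives at most one good iterate of the single orbit $\ga$ in $HC^0_*$ restricted to $[2N-n,2N+n]$. Since good iterates of a single simple orbit all carry a fixed Conley--Zehnder parity, this generator contributes to only one parity, whereas Bourgeois' description identifies the signed rank difference of $HC^0_*$ in the window with $\pm\chi(B)\ne 0$; combined with the fact that $r_B\geq 2$ for any closed connected symplectic $B$ of positive dimension, the even and odd ranks in the window cannot both be accounted for by a single orbit, forcing $p\geq 2$.

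The main technical obstacle is the careful bookkeeping at the CIJT window boundaries, particularly when $c_B=n$: adjacent windows then abut, and one must rule out leakage of iterates $\ga_i^{m_i\pm 1}$ between them. This is precisely the role of the parity hypothesis on $H_*(B;\Q)$ in (B). A secondary delicate point is achieving the sharp lower bound $r_B$ (rather than $r_B-1$) in (B), which requires simultaneous use of both semicontinuous extensions $\czl$ and $\czu$ of the Conley--Zehnder index alongside the full strength of CIJT, together with the iteration inequalities needed to convert the mean-index window data back into honest CZ-index window data for good orbits.
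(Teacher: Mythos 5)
Your overall framework (CIJT plus the Bourgeois Morse--Bott computation of $HC^0_*(\xi)$, with dynamical convexity controlling low-index iterates) is the same as the paper's, but there are three genuine gaps.

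First, the reduction of Assertion~(A) to the non-degenerate case by a $C^\infty$-small perturbation is invalid. The perturbed form having at least two simple orbits does not imply the original form does: a single degenerate orbit of $\alpha$ can bifurcate into many non-degenerate orbits of the perturbation. The whole content of (A) is that $\alpha$ may be degenerate, and that must be handled directly. The paper does this by showing (using $\chi(B)\neq 0$, the resonance relation, and the key lemma) that the unique simple orbit $\gamma$ must have $\gamma^{2m}$ a strongly degenerate maximum, whose local contact homology is concentrated in one degree $2N+n$; then a splitting-number analysis of $\gamma^{2m-1}$ shows it cannot account for the nonzero class $H_2(B;\Q)\subset HC^0_{2N+n-2}(\xi)$. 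Even modulo the invalid reduction, your parity/Euler-characteristic argument fails when $B$ has only even homology (e.g.\ $B=\CP^n$, giving $S^{2n+1}$): then all generators of $HC^0$ have the same parity and nothing forces two orbits.

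Second, for Assertion~(B) the claim that at most $p$ generators of $HC^0_*(\xi)$ lie in the window $[2N-n,2N+n]$ is not correct as stated. The CIJT controls $\cz(\gamma_i^{2m_i})$, but the adjacent iterates $\gamma_i^{2m_i\pm 1}$ satisfy $\cz(\gamma_i^{2m_i+1})=2N+\cz(\gamma_i)$ and $\cz(\gamma_i^{2m_i-1})=2N-\cz(\gamma_i)-2S_i^+(1)$, which can land exactly on the boundary degrees $2N\pm n$ when $\cz(\gamma_i)=n$. A naive window count therefore gives only $r_B-2$ (the open window $(2N-n,2N+n)$), and closing the gap of two is the hard part of the proof. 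The paper obtains one extra orbit from a key lemma asserting $HC_{2N+n}(\gamma_j^{2m_j})\neq 0$ for some $j$; this is proved by combining the Ginzburg--Goren/Hryniewicz--Macarini resonance relation $\sum_j\hat\chi(\gamma_j)/\Delta(\gamma_j)=\chi^0_+(\xi)$ with the Morse inequalities and a $\bmod\, 1$ argument using the refined form \eqref{eq:m}--\eqref{eq:epsilon} of the CIJT. The other extra orbit comes from a separate case analysis at degree $k^0_-=n$. Your proposal acknowledges the boundary bookkeeping as a difficulty but does not supply the resonance-relation mechanism, which is the actual engine that produces the missing generators; without it the argument only reaches $r_B-2$ (or $r_B$ only under the stronger hypothesis $c_B>n$, where the window can be taken open and still contain a full copy of $H_*(B;\Q)$).

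Third, a smaller point: the parity hypothesis $H_{\text{odd}}(B;\Q)=0$ when $c_B=n$ is used in the paper not to ``prevent adjacent windows from interfering'' (the isomorphism \eqref{eq:CH} already separates them cleanly by multiples of $I=2c_B$) but rather in the final case analysis (Case~2 of Proposition~\ref{prop:step3}) to ensure that a good orbit of index $n+1$ is geometrically distinct from all the $\gamma_j$ picked out by the CIJT, whose good iterates all have index of the same parity as $n$.
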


\begin{remark}
The hypothesis of dynamical convexity can be slightly relaxed; see Section \ref{sec:weak convexity}. The hypotheses on $\chi(B)$ and $H_\ast(B;\Q)$ are probably just technical and should be relaxed.
\end{remark}

\begin{remark}
The assumption $c_B\geq n$ can be replaced by the hypothesis that the Robbin-Salamon index of the smallest contractible multiple of the orbits of the circle action on $M$ is bigger than or equal to $2n$ or less than or equal to $-2n$; see Section \ref{sec:proof mainthm}. One can check that this hypothesis ensures that $M$ admits index-admissible contact forms. Consequently, its cylindrical contact homology is well defined. This hypothesis enters in the proof of the theorem in several ways and we do not know so far how to relax this.
\end{remark}

\begin{remark}
The condition on $c_1(\xi)$ is equivalent to the monotonicity of $B$ on $\pi_2(B)$, that is, $c_1(TB)|_{\pi_2(B)}=\lambda\omega|_{\pi_2(B)}$ for some $\lambda \in \R$. It is easy to see that if $\lambda>0$ (resp. $\lambda<0$) then $k^0_-$ is an integer and $k^0_+ = \infty$ (resp. $k^0_- = -\infty$ and $k^0_+$ is an integer). Thus, we have positive (resp. negative) $0$-dynamical convexity when $B$ is positive (resp. negative) monotone. Although there is no known example of a not aspherical positive monotone symplectic manifold with $c_B>n+1$, there are plenty of examples of negative monotone symplectic manifolds with arbitrarily high minimal Chern number; see \cite[pages 429-430]{MS}.
\end{remark}

\begin{remark}
This result is sharp in the non-degenerate case in any dimension as the examples of the irrational ellipsoid and the Katok-Ziller Finsler metrics show; see Remark \ref{rmk:sharp} below. These examples also show that the first statement of the theorem is sharp in dimension three, though so far no degenerate example with finitely many closed orbits is known.
\end{remark}

\begin{remark}
In dimension three $M$ has to be $S^3$ or a lens space and the theorem follows from the general result,  proved in \cite{CGH}, that every Reeb flow on a closed 3-manifold carries at least two simple closed orbits. 
\end{remark}

\begin{remark}
\label{rmk:transversality}
As it is well known, a rigorous construction of cylindrical contact homology has foundational transversality issues which we expect to be solved with the ongoing work on polyfolds of Hofer, Wysocki and Zehnder \cite{HWZ10,HWZ11,HWZ14}; see also the recent work of Pardon \cite{Par}. However, the proof of Assertion B works in a straightforward way for (the positive part of the) $S^1$-equivariant symplectic homology $SH^{+,S^1}(M,\xi)$ introduced by Bourgeois and Oancea \cite{BO} whenever the isomorphism \eqref{eq:CH} holds for $SH^{+,S^1}(M,\xi)$. This is the case, for instance, if $B$ admits a Morse function admitting only critical points with even index (see \cite{Gut}) and this is enough for the main applications in this work (indeed, one can check that this condition holds when $M$ is given by the standard contact sphere $S^{2n+1}$ or the unit sphere bundle $S^*S^{n+1}$; see \cite{AM}). The definition of equivariant symplectic homology usually requires a symplectic filling of $M$ but, as explained in \cite[Section 4.1.2]{BO}, this filling is not necessary if every contractible periodic orbit $\ga$ of $\alpha$ satisfies $\cz(\ga) + n - 2 > 1$. By dynamical convexity and our assumptions, this condition is fulfilled by $\alpha$ if $n>1$ (assuming that $\alpha$ is positively dynamically convex and that the lower bound for the index of the closed orbits holds for every contractible closed orbit, good or not); when $n=1$ the prequantization $M$ must be $S^3$ or a lens space and in this case, as already observed, the theorem follows from the general fact that every Reeb flow on a closed 3-manifold carries at least two closed orbits.

The proof of Assertion A should also work with $SH^{+,S^1}(M,\xi)$. The only necessary ingredient is a local version of equivariant symplectic homology similar to the one constructed in \cite{HM} for contact homology. In fact, we use only the Morse inequalities, resonance relations and the fact that the local homology of an SDM is concentrated in the highest degree; see Section \ref{sec:proof mainthm} for details. Curiously, in the proof of Assertion A we do not have to use \cite[Theorem 1.2]{GHHM} which establishes that, under some assumptions on $M$, the presence of a simple SDM forces the existence of infinitely many simple closed orbits.
\end{remark}

Examples of contact manifolds satisfying the hypotheses of Theorem \ref{MainThm} that will be particularly interesting for us are given by the standard contact sphere $S^{2n+1}$ and the sphere bundle $S^*S^{n+1}$ of $S^{n+1}$, with $n \geq 1$, endowed with the standard contact structure induced by the Liouville form on $T^*S^n$. Indeed, $S^{2n+1}$ is the prequantization of $\CP^n$ and $S^*S^{n+1}$ is the prequantization of $G^+_2(\R^{n+2})$, where $G^+_2(\R^{n+2})$ is the Grassmannian of oriented two-planes in $\R^{n+2}$ (see \cite{AM}). A computation shows that $c_{\CP^n}=n+1$, $c_{G^+_2(\R^{n+2})}\geq n$ (more precisely, $c_{G^+_2(\R^{3})}=2$ and $c_{G^+_2(\R^{n+2})} = n$ for $n>1$), $\dim H_\ast(\CP^n,\Q) = n+1$, $\dim H_\ast(G^+_2(\R^{n+2}),\Q) = 2(\lfloor \frac{n}{2}\rfloor+1)$ and $H_k(G^+_2(\R^{n+2}),\Q) = 0$ for every odd $k$, where, for a given $c \in \R$, $\lfloor c \rfloor := \max\{k \in \Z;\, k\leq c\}$. Thus, we obtain the following immediate corollaries.

\begin{corollary}
\label{cor:standard sphere}
Let $\alpha$ be a dynamically convex contact form on the standard contact sphere $S^{2n+1}$ with $n\geq 1$. Then the Reeb flow of $\alpha$ carries at least two simple closed orbits. Moreover, if $\alpha$ is non-degenerate then it carries at least $n+1$ simple periodic orbits.
\end{corollary}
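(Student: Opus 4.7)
The plan is to apply Theorem~\ref{MainThm} directly, using $S^{2n+1}$ as the prequantization of $(\CP^n,\omega_{FS})$, and simply verify each hypothesis against the computations already recorded in the paragraph preceding the corollary. Since $S^{2n+1}$ is simply connected, every closed Reeb orbit is contractible; the only free homotopy class on $M$ is the trivial one, so $HC^0_\ast(\xi)=HC_\ast(\xi)$ and $k^0_\pm=k_\pm$. Consequently the notion of ``dynamical convexity'' from the hypothesis of the corollary coincides automatically with ``$0$-dynamical convexity'' in Theorem~\ref{MainThm}. The condition $c_1(\xi)|_{\pi_2(M)}=0$ holds since $TS^{2n+1}$ is stably trivial, and $\omega_{FS}|_{\pi_2(\CP^n)}\neq 0$ is clear. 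Finally the numerics $c_{\CP^n}=n+1\geq n$, $\chi(\CP^n)=n+1\neq 0$ and $r_B=\dim H_\ast(\CP^n;\Q)=n+1$ are exactly what was recalled in the paragraph above the corollary.

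For the first statement I would invoke part~(A) of Theorem~\ref{MainThm}: because $\chi(\CP^n)\neq 0$, the Reeb flow of $\alpha$ admits at least two geometrically distinct contractible periodic orbits, and since all orbits are contractible this produces two simple closed orbits. For the second statement, assuming $\alpha$ is non-degenerate I would apply part~(B); note that the proviso ``if $c_B=n$ then $H_k(B;\Q)=0$ for odd $k$'' needs verification only when $c_B=n$, whereas here $c_{\CP^n}=n+1>n$, so the proviso is vacuous (incidentally $H_{\text{odd}}(\CP^n;\Q)=0$ holds anyway). Theorem~\ref{MainThm}(B) then yields at least $r_B=n+1$ geometrically distinct simple closed orbits.

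There is effectively no obstacle: the corollary is a straightforward specialization, whose only subtle point is recognizing that dynamical convexity on a simply connected manifold automatically upgrades to $0$-dynamical convexity in the sense required by Theorem~\ref{MainThm}. All remaining data — $c_1(\xi)$, asphericity, minimal Chern number, Euler characteristic, and total rank of rational homology — are standard computations for $\CP^n$ already tabulated in the paper.
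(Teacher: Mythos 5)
Your proposal is correct and is essentially the paper's own argument: the corollary is stated there as an immediate specialization of Theorem~\ref{MainThm} to $S^{2n+1}$ viewed as the prequantization of $\CP^n$, using the tabulated values $c_{\CP^n}=n+1$, $\chi(\CP^n)=n+1$ and $r_{\CP^n}=n+1$. Your extra observation that simple connectivity makes every orbit contractible, so that dynamical convexity coincides with $0$-dynamical convexity, is exactly the (implicit) point needed and is correctly handled.
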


The first statement generalizes a classical result due to Ekeland and Hofer \cite{EH} which establishes that every convex hypersurface in $\R^{2n}$, $n\geq 2$, has at least two geometrically distinct closed characteristics. The second statement was proved before by Gutt and Kang \cite{GK} and generalizes a result due to  Long and Zhu \cite{LZ}. It must be mentioned here that a crucial ingredient in our proofs, as well as in \cite{GK}, is the common index jump theorem due to Long and Zhu \cite{LZ}; see Section \ref{sec:CIJT}.

\begin{corollary}
\label{cor:sphere bundle of S^n}
Every $0$-dynamically convex contact form $\alpha$ on $S^*S^{n+1}$ endowed with the standard contact structure has at least two simple closed orbits. Moroever if $\alpha$ is non-degenerate then it has at least $2(\lfloor \frac{n}{2}\rfloor+1)$ simple periodic orbits.
\end{corollary}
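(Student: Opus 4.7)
The plan is to apply Theorem~\ref{MainThm} directly, with $M = S^*S^{n+1}$ and $B = G^+_2(\R^{n+2})$, by verifying each of its hypotheses from the data collected in the paragraph preceding the corollary. First, I would invoke the identification of $S^*S^{n+1}$ (with its standard contact structure) as the prequantization of the oriented Grassmannian $G^+_2(\R^{n+2})$ recalled in \cite{AM}, which fixes the geometric setup of the theorem. The three standing topological hypotheses are then straightforward to check: the standard contact structure $\xi$ on $S^*S^{n+1}$ satisfies $c_1(\xi)|_{\pi_2(M)}=0$; the Grassmannian is a simply connected symplectic manifold that is not aspherical, so $\om|_{\pi_2(B)}\neq 0$; and $c_B\geq n$, since $c_{G^+_2(\R^{3})}=2$ and $c_{G^+_2(\R^{n+2})}=n$ for $n>1$.

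To obtain the first conclusion I would invoke Assertion~(A) of Theorem~\ref{MainThm}. The only additional hypothesis is $\chi(B)\neq 0$, which follows immediately from the stated vanishing $H_k(G^+_2(\R^{n+2});\Q)=0$ for every odd $k$: under that vanishing the Euler characteristic coincides with the total rank, and hence $\chi(B)=\dim H_\ast(B;\Q)=2(\lfloor n/2\rfloor+1)>0$.

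For the second conclusion, in the non-degenerate case, I would invoke Assertion~(B). Its extra hypothesis, imposed only when $c_B=n$, is exactly the odd-degree vanishing of $H_\ast(B;\Q)$, and this holds for every $n\geq 1$. The lower bound produced by the theorem is then $r_B=\dim H_\ast(B;\Q)=2(\lfloor n/2\rfloor+1)$, which is the claimed multiplicity.

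Since the corollary is a direct specialization of Theorem~\ref{MainThm} to this prequantization, there is essentially no obstacle: all the work is absorbed into the identification $M=S^*S^{n+1}\to G^+_2(\R^{n+2})$ and into the topological computation of $c_B$, $\chi(B)$ and $\dim H_\ast(B;\Q)$, both of which are available from \cite{AM} and the discussion preceding the statement. The one point worth flagging is that the hypothesis ``$0$-dynamically convex'' in the corollary matches Definition~\ref{def:dynconvex} (with free homotopy class $a=0$) verbatim, so no additional hypothesis on $\alpha$ is needed to feed into the theorem.
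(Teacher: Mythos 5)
Your proposal is correct and matches the paper's own treatment: the corollary is stated in the paper as an immediate consequence of Theorem~\ref{MainThm} combined with the computations of $c_{G^+_2(\R^{n+2})}$, $\dim H_\ast(G^+_2(\R^{n+2});\Q)$, and the odd-degree vanishing of $H_\ast(G^+_2(\R^{n+2});\Q)$ listed just before the corollaries. Your observation that $\chi(B)=\dim H_\ast(B;\Q)>0$ under the odd-degree vanishing is exactly what makes Assertion~(A) applicable.
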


\begin{remark}
The hypothesis of dynamical convexity in Corollaries \ref{cor:standard sphere} and \ref{cor:sphere bundle of S^n} can be slightly relaxed; see Section \ref{sec:weak convexity}.
\end{remark}

As an application, consider a Finsler metric $F$ on $S^n$. Denote by $K$ the flag curvature of $F$ and let $\lambda$ be its reversibility. As observed in \cite{AM}, the pinching condition $(\frac{\lambda}{\lambda+1})^2 < K \leq 1$ implies that the contact form corresponding to $F$ (whose Reeb flow is the geodesic flow) is $0$-dynamically convex. Therefore, we obtain the following result.

\begin{corollary}
\label{cor:geodesic flow}
Let $F$ be a Finsler metric on $S^{n+1}$, $n\geq 1$, satisfying the pinching condition $(\frac{\lambda}{\lambda+1})^2 < K \leq 1$. Then $F$ has at least two prime closed geodesics. Moreover, if $F$ is bumpy then it carries at least $2(\lfloor \frac{n}{2}\rfloor+1)$ prime closed geodesics.
\end{corollary}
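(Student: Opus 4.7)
The plan is to recognize the statement as a direct consequence of Corollary \ref{cor:sphere bundle of S^n} via the standard translation between Finsler geodesic flows and Reeb flows on unit cosphere bundles.

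First I would set up the contact-geometric reformulation. The dual co-Finsler norm of $F$ cuts out a fiberwise starshaped unit cosphere hypersurface inside $T^*S^{n+1}$, and the restriction of the canonical Liouville one-form yields a contact form $\alpha_F$ on $S^*S^{n+1}$ supporting the standard contact structure. Via the Legendre transform, the Reeb flow of $\alpha_F$ is conjugate to the geodesic flow of $F$, so prime closed geodesics of $F$ correspond bijectively to simple closed orbits of $R_{\alpha_F}$, and $F$ is bumpy precisely when $\alpha_F$ is non-degenerate in the contact-geometric sense.

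Second, I would invoke the result from \cite{AM}, already quoted in the paragraph preceding the statement, that the pinching hypothesis $(\lambda/(\lambda+1))^2 < K \leq 1$ forces $\alpha_F$ to be $0$-dynamically convex. This is the only genuine analytic input of the proof; in \cite{AM} it is established by a Jacobi-field comparison with the round metric on $S^{n+1}$, the flag-curvature pinching together with the reversibility $\lambda$ being used to bound the Conley-Zehnder indices of iterates of contractible closed orbits from below by the value $k^0_-$ determined by the cylindrical contact homology of $(S^*S^{n+1},\xi_{\text{std}})$.

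With these two ingredients in place, Corollary \ref{cor:sphere bundle of S^n} applies directly to $\alpha_F$ and yields both conclusions: at least two simple closed orbits in general, hence two prime closed geodesics of $F$; and at least $2(\lfloor n/2 \rfloor + 1)$ simple closed orbits in the non-degenerate case, hence the same count of prime closed geodesics when $F$ is bumpy. The main obstacle, which is effectively delegated to \cite{AM}, is the pinching-implies-dynamical-convexity step; once that is accepted, the argument is a purely formal translation between the Finsler and contact-topological dictionaries, and no further work is required.
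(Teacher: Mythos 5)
Your proposal is correct and follows exactly the route the paper takes: identify the Finsler contact form $\alpha_F$ on $S^*S^{n+1}$ whose Reeb flow is the geodesic flow of $F$, cite \cite{AM} for the fact that the pinching condition implies $0$-dynamical convexity, and then apply Corollary \ref{cor:sphere bundle of S^n}. Nothing more is needed.
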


The first assertion follows from a more general result due to Duan and Long \cite{DL2}. The second assertion was proved by Wang \cite{Wa1}.

\begin{remark}
The number $2(\lfloor \frac{n}{2}\rfloor+1)$ coincides with the number of prime periodic orbits of the examples due to Katok-Ziller \cite{Zil} of Finsler metrics on $S^{n+1}$ with finitely many prime closed geodesics (these examples have constant flag curvature equal to one). Motivated by these examples, Anosov conjectured in 1974 that every Finsler metric on $S^{n+1}$ has at least  $2(\lfloor \frac{n}{2}\rfloor+1)$ prime closed geodesics. The second assertion of the previous corollary, due to Wang, proves partially this conjecture.
\end{remark}

\begin{remark}
\label{rmk:sharp}
The irrational ellipsoid and the examples of Katok-Ziller mentioned in the previous remark show that Corollaries \ref{cor:standard sphere} and \ref{cor:sphere bundle of S^n} are sharp.
\end{remark}

Now, let us notice that a simple observation shows that Theorem \ref{MainThm} gives a lower bound for the number of simple periodic orbits of non-degenerate contact forms without the hypothesis of dynamical convexity. Indeed, let us recall that a contact form is called \emph{perfect} if it is non-degenerate and its differential in contact homology vanishes. (Here we can tacitly consider the contact homology filtered by some free homotopy class.) This means that every good periodic orbit is \emph{homologically necessary}. Clearly, from the very definition, every perfect contact form is dynamically convex.

A non-degenerate contact form is \emph{geometrically perfect} if the Conley-Zehnder index of every good periodic orbit has the same parity. (As before, we can tacitly consider only periodic orbits in a fixed free homotopy class.) Obviously, a geometrically perfect contact form is perfect. Note that a non-degenerate contact form carrying only one simple contractible closed orbit is geometrically perfect (considering the contact homology generated by contractible orbits). In particular, it is $0$-dynamically convex. Thus, arguing by contradiction, we obtain the following result.

\begin{corollary}
\label{cor:2orbits}
Let $(M^{2n+1},\xi)$ be a prequantization of a closed symplectic manifold $(B,\om)$ such that $c_1(\xi)|_{\pi_2(M)}=0$, $\om|_{\pi_2(B)}\neq 0$ and $c_B\geq n$. Moreover, if $c_B=n$ suppose that $\chi(B) \neq 0$. Then every non-degenerate contact form on $M$ has at least two geometrically distinct contractible closed orbits.
\end{corollary}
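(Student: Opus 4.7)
My plan is to follow the strategy sketched in the paragraph preceding the corollary: argue by contradiction and reduce to Theorem~\ref{MainThm}. Suppose that the non-degenerate contact form $\alpha$ on $M$ has exactly one simple contractible closed orbit $\gamma$. Then every good contractible closed orbit of $R_\alpha$ is (by definition of ``good'') an iterate $\gamma^k$ whose Conley--Zehnder index has the same parity as $\cz(\gamma)$. Hence all good contractible generators of $HC^0_\ast(\xi)$ live in degrees of a single parity, so the cylindrical differential (which shifts parity by one) vanishes on this subcomplex. This shows that $\alpha$ is geometrically perfect in the free homotopy class $0$, and in particular perfect in that class.

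The next step is to deduce $0$-dynamical convexity. Because every good contractible iterate $\gamma^k$ contributes non-trivially to $HC^0_\ast(\xi)$, the set of indices $\{\cz(\gamma^k)\}$ is precisely the set of degrees where $HC^0_\ast(\xi)$ is supported. Consequently $k^0_-$ and $k^0_+$ are finite integers realized by iterates of $\gamma$, and the inequalities $\czl(\eta)\geq k^0_-$ and $\czu(\eta)\leq k^0_+$ hold trivially for every contractible closed orbit $\eta=\gamma^k$ of $R_\alpha$ (in fact with equality for the extremal ones). Thus $\alpha$ is $0$-dynamically convex in both the positive and the negative sense, so Theorem~\ref{MainThm} becomes applicable.

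It remains to derive the contradiction by a case split on $c_B$. If $c_B>n$, then the hypothesis of Assertion~(B) concerning odd-degree rational homology of $B$ is vacuous, so Theorem~\ref{MainThm}(B) yields at least $r_B$ simple contractible periodic orbits; since $B$ is a closed symplectic manifold of positive dimension, $H_0(B;\Q)$ and $H_{\dim B}(B;\Q)$ are both non-zero and hence $r_B\geq 2$. If instead $c_B=n$, then by hypothesis $\chi(B)\neq 0$, so Theorem~\ref{MainThm}(A) directly yields at least two simple contractible periodic orbits. In either case we contradict the assumption that $\gamma$ is the unique simple contractible closed orbit of $\alpha$, completing the proof.

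The only step that requires any care is the passage from ``one simple contractible orbit'' to $0$-dynamical convexity; once that is in place, the argument is a direct invocation of Theorem~\ref{MainThm}, so I do not expect a genuine obstacle here beyond the bookkeeping between good and bad iterates and between free homotopy classes.
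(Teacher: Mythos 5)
Your argument is correct and is essentially the paper's own proof: arguing by contradiction, a unique simple contractible closed orbit makes $\alpha$ geometrically perfect for contractible orbits, hence perfect and $0$-dynamically convex, and Theorem \ref{MainThm} (Assertion (B) when $c_B>n$, where $r_B\geq 2$; Assertion (A) when $c_B=n$, using $\chi(B)\neq 0$) then gives two geometrically distinct contractible orbits, a contradiction. One minor overstatement: in this setting $k^0_+=+\infty$ (by \eqref{eq:CH} the contractible contact homology is supported in arbitrarily high degrees), so $\alpha$ is only \emph{positively} $0$-dynamically convex, but that is all the theorem requires.
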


In particular, we conclude the following corollary proved before by Duan and Long \cite{DL1} and Rademacher \cite{Rad}:

\begin{corollary}
\label{cor:2orbitsFinsler}
Every bumpy Finsler metric on $S^{n}$ carries at least two prime closed geodesics.
\end{corollary}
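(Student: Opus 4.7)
The plan is to deduce this corollary directly from Corollary \ref{cor:2orbits} applied to the unit cotangent bundle. Let $F$ be a bumpy Finsler metric on $S^n$; its geodesic flow is the Reeb flow of a non-degenerate contact form $\alpha$ on $M = S^*S^n$ endowed with the standard contact structure $\xi$ induced by the Liouville form on $T^*S^n$, and prime closed geodesics of $F$ correspond bijectively to prime closed Reeb orbits of $\alpha$.

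First I would dispose of the low-dimensional case $n=2$ separately. Here $M = \RP^3$ is a closed $3$-manifold, so the theorem of Cristofaro-Gardiner and Hutchings \cite{CGH} directly guarantees at least two prime closed orbits of $\alpha$, hence at least two prime closed geodesics of $F$.

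For $n \geq 3$ I would apply Corollary \ref{cor:2orbits} with $M = S^*S^n$, which is the prequantization of $B = G^+_2(\R^{n+1})$, and with $\dim M = 2(n-1) + 1$ (so that the integer called $n$ in the statement of Corollary \ref{cor:2orbits} is here $n-1$). The hypotheses are verified as follows: $c_1(\xi)|_{\pi_2(M)} = 0$ by the standard properties of the canonical contact structure on the unit cotangent bundle of a sphere; $\omega|_{\pi_2(B)} \neq 0$ because the Grassmannian $G^+_2(\R^{n+1})$ is not aspherical; the minimal Chern number satisfies $c_B = n-1$ for every $n \geq 3$ according to the computations recalled just before Corollary \ref{cor:sphere bundle of S^n}, so the equality case of the corollary applies; and in that case one needs $\chi(B) \neq 0$, which holds because the rational homology of $G^+_2(\R^{n+1})$ is concentrated in even degrees and has total dimension $2(\lfloor (n-1)/2 \rfloor + 1) > 0$. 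The corollary then produces at least two geometrically distinct contractible closed Reeb orbits of $\alpha$. Since $\pi_1(S^*S^n) = 0$ for $n \geq 3$ (from the homotopy exact sequence of the fibration $S^{n-1} \to S^*S^n \to S^n$, in which both the fiber and the base are simply connected), every closed Reeb orbit is automatically contractible, so these two orbits yield two distinct prime closed geodesics of $F$.

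No serious obstacle is expected: the argument is a direct reduction to Corollary \ref{cor:2orbits} once the arithmetic of $c_B$ and $\chi(B)$ for the oriented Grassmannian is sorted out. The only genuine care needed is to split off $n = 2$, where $S^*S^2 = \RP^3$ is not simply connected and contractibility of closed orbits in $M$ cannot be inferred from the topology of the base alone; in that dimension one instead invokes the general result of \cite{CGH} for closed $3$-manifolds.
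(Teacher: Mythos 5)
Your proof is correct and follows essentially the same route as the paper, which deduces Corollary \ref{cor:2orbitsFinsler} directly from Corollary \ref{cor:2orbits} applied to the prequantization $S^*S^n \to G^+_2(\R^{n+1})$ (the paper leaves the details implicit, while you spell out the dimension shift $n \mapsto n-1$, the computation $c_B = n-1$ with $\chi(B) = r_B \neq 0$, and the simple connectivity of $S^*S^n$ for $n \geq 3$). Your separate treatment of $n=2$ via Cristofaro-Gardiner--Hutchings is exactly the paper's own remark about dimension three, so nothing is lost.
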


\subsection{Multiplicity of elliptic and non-hyperbolic periodic orbits}

The proof of Theorem \ref{MainThm} has the following byproducts concerning the existence and multiplicity of elliptic and non-hyperbolic closed orbits when the contact form has finitely many geometrically distinct contractible closed orbits. Recall that a closed orbit is elliptic (resp. hyperbolic) if every eigenvalue of its linearized Poincar\'e map has modulus equal to (resp. different from) one.

\begin{theorem}
\label{thm:elliptic}
Let $(M^{2n+1},\xi)$ be a prequantization of a closed symplectic manifold $(B,\om)$ such that $c_1(\xi)|_{\pi_2(M)}=0$, $\om|_{\pi_2(B)}\neq 0$ and $c_B\geq n$. Let $\alpha$ be a $0$-dynamically convex contact form supporting $\xi$ with finitely many geometrically distinct contractible closed orbits. Then the Reeb flow of $\alpha$ carries at least one elliptic contractible periodic orbit.
\end{theorem}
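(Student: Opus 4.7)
We argue by contradiction. Suppose $R_\alpha$ has only finitely many simple contractible closed orbits $\gamma_1,\dots,\gamma_p$ and that none of them is elliptic; without loss of generality $\alpha$ is positively $0$-dynamically convex, the negative case being symmetric. As in the proof of Theorem \ref{MainThm}, positive $0$-dynamical convexity together with the hypotheses $c_B\geq n$ and $\om|_{\pi_2(B)}\neq 0$ forces every mean index $\widehat\Delta(\gamma_i)$ to be strictly positive. By the non-ellipticity hypothesis each $\gamma_i$ has at least one Floquet multiplier off the unit circle, so the number of its Floquet multiplier pairs on the unit circle is at most $n-1$; Long's refined mean--index inequality then yields
\begin{equation*}
|\cz(\gamma_i^m)-m\,\widehat\Delta(\gamma_i)|\leq n-1\quad\text{for every }m\in\N,
\end{equation*}
strictly less than the general bound $n$, which is attained only by fully elliptic orbits.

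Next I would apply the common index jump theorem of Long and Zhu (Section \ref{sec:CIJT}) to the collection $\gamma_1,\dots,\gamma_p$, obtaining a large positive integer $N$ and iteration exponents $m_1,\dots,m_p$ such that the Conley--Zehnder indices of the chosen iterates $\gamma_i^{m_i}$, together with those of the nearby iterates $\gamma_i^{m_i\pm k}$ for $k$ in a controlled range, cluster around $2N$ in a pattern dictated by the splitting numbers $S^{\pm}(\gamma_i)$. Combining this alignment with the improved index estimate above, every such iterate $\gamma_i^{m_i+k}$ has Conley--Zehnder index in $[2N-n+1,\,2N+n-1]$ and, crucially, cannot attain the extremal top value $2N+n-1$.

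The final step is to invoke the Bourgeois description of the cylindrical contact homology of the prequantization $M\to B$ (or of its $S^1$-equivariant positive symplectic homology counterpart, as in Remark \ref{rmk:transversality}): in the free homotopy class $0$, the groups $HC^0_*(\xi)$ are given, around degree $2N$, by a shifted copy of $H_*(B;\Q)$. In particular, the top class $[B]\in H_{2n}(B;\Q)$, which is non-zero because $B$ is a closed oriented $2n$-manifold, produces a non-zero class at the extremal top degree $2N+n-1$ of this window. The Morse--type inequalities for contact homology then demand the existence of a good contractible periodic orbit of Conley--Zehnder index exactly $2N+n-1$, contradicting the conclusion of the previous paragraph. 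Hence some $\gamma_i$ must be elliptic.

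\emph{The principal obstacle} in executing this plan is the precise index bookkeeping inside the CIJT window: negative hyperbolic eigenvalues contribute half--integer terms to the splitting numbers, and one has to verify carefully, using the splitting--number calculus of Long together with the parity constraints imposed by the good orbit condition, that no iterate of a non-elliptic orbit can occupy the extremal degree $2N+n-1$, even after all such half--integer corrections are accounted for.
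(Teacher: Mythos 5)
Your plan reproduces the paper's mechanism at the endpoints (CIJT alignment around $2N$, the elliptic-height bound \eqref{eq:CIJT5}, and Bourgeois' computation \eqref{eq:CH} of $HC^0_*(\xi)$), but the step that is supposed to produce the contradiction has a genuine gap. First, a bookkeeping slip: with $N$ a multiple of $I$, the class $[B]\in H_{2n}(B;\Q)$ sits in degree $2N+n$, not $2N+n-1$ (in degree $2N+n-1$ one finds $H_{2n-1}(B;\Q)$, which may vanish); as written, your window bound $[2N-n+1,2N+n-1]$ does not even exclude your claimed extremal degree. More seriously, after correcting this to degree $2N+n$, you must rule out contributions from iterates \emph{outside} the CIJT window. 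By \eqref{eq:CIJT3} and the monotonicity \eqref{eq:indexmon}, the iterates $\gamma_i^m$ with $m\geq 2m_i+1$ satisfy $\czl(\gamma_i^m)\geq 2N+\czl(\gamma_i)\geq 2N+k^0_-$, and the hypotheses of the theorem allow $c_B=n$, i.e.\ $k^0_-=n$ (this is exactly the case of $S^*S^{n+1}$). A hyperbolic orbit $\gamma_i$ with $\czl(\gamma_i)=n$ then has the odd (hence good) iterate $\gamma_i^{2m_i+1}$ with $\czl=2N+n$, so a non-elliptic orbit can perfectly well carry the generator of $HC^0_{2N+n}(\xi)$; your contradiction does not follow, and no splitting-number or parity bookkeeping \emph{inside} the window can repair this, since the offending iterate lies outside it. Two smaller points: dynamical convexity alone gives only $\Delta(\gamma_i)\geq\czl(\gamma_i)-n\geq 0$ when $k^0_-=n$, so the positivity of mean indices needed to invoke the CIJT is not automatic (the paper proves in Proposition \ref{prop:morseineq} that homologically \emph{visible} orbits have positive mean index); and since $\alpha$ is not assumed non-degenerate, the argument must be phrased via $\czl$, $\nu$ and the support property \eqref{eq:support CH} of local contact homology rather than via Conley--Zehnder indices of the orbits themselves.

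The paper's proof avoids precisely this difficulty by not trying to show that the top degree is unreachable. Instead, Lemma \ref{lemma:key} assumes $HC_{2N+n}(\gamma_j^{2m_j})=0$ for all $j$ and compares Euler characteristics over the range $[n,2N+n-1]$, strictly below the extremal degree, where the iterates $m>2m_j$ provably do not contribute: the resonance relation \eqref{eq:resonance} identifies $\sum_{k=n}^{2N+n-1}(-1)^kc_k$ with $(-1)^n2s\chi(B)$, while \eqref{eq:CH} gives $(-1)^n2s\chi(B)+(-1)^{n+1}$ for the corresponding alternating sum of Betti numbers, contradicting the Morse inequalities of Proposition \ref{prop:morseineq}. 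This produces some $j$ with $HC_{2N+n}(\gamma_j^{2m_j})\neq 0$, and ellipticity of $\gamma_j$ then follows from \eqref{eq:CIJT5} and \eqref{eq:support CH} as in Remark \ref{rmk:elliptic} --- the same final step you envisage, but applied to an orbit whose existence is forced by the Euler-characteristic argument rather than by excluding all alternatives at the top degree. Your direct route can be salvaged when $c_B>n$ (then $k^0_-\geq n+2$, so all iterates $m\geq 2m_i+1$ have $\czl>2N+n$ and only the $\gamma_i^{2m_i}$ can reach degree $2N+n$), but it breaks down in the boundary case $c_B=n$, which the theorem covers.
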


\begin{theorem}
\label{thm:non-hyperbolic}
 Let $(M^{2n+1},\xi)$ be a prequantization of a closed symplectic manifold $(B,\om)$ such that $c_1(\xi)|_{\pi_2(M)}=0$, $\om|_{\pi_2(B)}\neq 0$ and $c_B\geq n$. Assume that $n$ is odd and that $H_{k}(B;\Q)=0$ for every odd $k$. Let $\alpha$ be a non-degenerate $0$-dynamically convex contact form supporting $\xi$ with finitely many geometrically distinct contractible closed orbits. Then the Reeb flow of $\alpha$ carries at least $r_B$ geometrically distinct non-hyperbolic contractible periodic orbits.
\end{theorem}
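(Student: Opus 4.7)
We argue by contradiction, refining the proof of Theorem \ref{MainThm}(B) by adding a parity obstruction. Suppose the Reeb flow of $\alpha$ carries at most $r_B - 1$ non-hyperbolic simple contractible closed orbits. Since Theorem \ref{MainThm}(B) guarantees at least $r_B$ simple contractible closed orbits in total, at least one simple contractible orbit $\gamma_0$ must be hyperbolic. Apply the common index jump theorem of Section \ref{sec:CIJT} to the full collection of simple contractible closed orbits $\gamma_1,\ldots,\gamma_p$ (with $p \geq r_B$) to obtain iteration exponents $m_1,\ldots,m_p$ placing every $\cz(\gamma_j^{m_j})$ inside a common index window $W$ centered at a large value. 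As in the proof of Theorem \ref{MainThm}(B), the collection $\{\gamma_j^{m_j}\}$, together with small shifts $\gamma_j^{m_j \pm \ell}$, must realize generators for every nonzero class of $HC^0_*(\xi)$ whose degree lies in $W$.

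Under the additional hypotheses that $n$ is odd and $H_{\mathrm{odd}}(B;\Q)=0$, the Gutt-type description of $HC^0_*(\xi)$ as shifted copies of $H_*(B;\Q)$ indexed by the fiber iterates of the prequantization (cf. Section \ref{sec:CH&index} and Remark \ref{rmk:transversality}) implies that, throughout $W$, the nonzero part of $HC^0_*(\xi)$ lives in degrees of a single fixed parity. Any good iterate contributing a generator must therefore have $\cz$ of that parity. For the hyperbolic orbit $\gamma_0$, however, $\Delta(\gamma_0)\in\Z$, $\cz(\gamma_0^k) = k \Delta(\gamma_0)$ for every good iterate $k$, and the local contact homology of $\gamma_0^k$ is either one-dimensional concentrated in that single degree or vanishes (for bad iterates). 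A case analysis exploiting the freedom to adjust $m_0$ within the common index jump theorem, together with the index iteration formulas of Section \ref{sec:CIJT}, should show that the rigid arithmetic progression $\{k\Delta(\gamma_0)\}$ of good indices of $\gamma_0$ in $W$ misses at least one degree where $HC^0_*(\xi)$ is nonzero. Consequently the at most $r_B - 1$ non-hyperbolic orbits cannot produce the $r_B$ required independent generators, contradicting the count from Theorem \ref{MainThm}(B).

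The main obstacle will be the bookkeeping in the second paragraph: verifying that no choice of $m_0$ compatible with the common iterates of the other $\gamma_j$'s can make the rigidly spaced good indices of $\gamma_0$ cover every degree of $HC^0_*(\xi)$ in $W$ that needs a generator. This is precisely where the twin hypotheses $n$ odd and $H_{\mathrm{odd}}(B;\Q)=0$ enter: together they force uniform parity of the nonvanishing degrees of $HC^0_*(\xi)$ in $W$ and thus align the Maslov shifts of the summands $H_*(B;\Q)$ so that no gap can be filled by a single hyperbolic arithmetic progression combined with the reduced non-hyperbolic contribution. In spirit this mirrors the combinatorial mechanism in the Long--Zhu and Gutt--Kang arguments, but here it is refined to distinguish hyperbolic from non-hyperbolic generators rather than merely counting simple orbits.
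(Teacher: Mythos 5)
Your parity mechanism is indeed the one that makes the orbits of Proposition \ref{prop:step1} non-hyperbolic (this is exactly Remark \ref{rmk:nonhyp1}: when $n$ is odd and $H_{\mathrm{odd}}(B;\Q)=0$, all nonzero degrees of $HC^0_*(\xi)$ near $2N$ are odd, while even iterates of hyperbolic orbits have even index). But this only accounts for $r_B-2$ non-hyperbolic orbits, and your proposal does not close the gap to $r_B$. First, the orbit contributing in degree $2N+n$ is not forced by window rank-counting plus parity alone: the edge degrees $2N\pm n$ can a priori be hit by the \emph{odd} iterates $\gamma_j^{2m_j\pm 1}$ of orbits already counted (by \eqref{eq:CIJT2}--\eqref{eq:CIJT3}, $\cz(\gamma_j^{2m_j+1})=2N+\cz(\gamma_j)$ can equal $2N+n$), so parity says nothing new there; the paper needs Lemma \ref{lemma:key} (Morse inequalities plus the resonance relation) to guarantee that the degree-$(2N+n)$ class is carried by an \emph{even} iterate $\gamma_j^{2m_j}$, whence ellipticity via \eqref{eq:CIJT4}--\eqref{eq:CIJT5}. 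Second, and more seriously, the $r_B$-th orbit comes from the bottom-degree analysis (Proposition \ref{prop:step3}), and in Case 2 the extra orbit has index $n+1$, which is \emph{even}; your parity obstruction cannot exclude that this orbit is hyperbolic, and in fact nothing in your argument does. The paper's Remark \ref{rmk:nonhyp4} handles precisely this: if that orbit is hyperbolic, its $2m_0$-th iterate is a \emph{good} generator of the chain complex in degree $2N$, where $HC^0_{2N}(\xi)=0$; the forced cancellation produces an orbit with an even iterate of odd index $2N\pm1$, which must be a new simple non-hyperbolic orbit. Nothing in your sketch plays this role.

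Relatedly, your intended contradiction does not close as stated. Showing that the arithmetic progression $\{k\Delta(\gamma_0)\}$ of a hyperbolic orbit ``misses'' some nonzero degree of $HC^0_*(\xi)$ is not contradictory: hyperbolic orbits are simply not needed to generate the homology, and with $r_B-1$ non-hyperbolic orbits one can in principle still cover all required classes in the window (interior degrees by $r_B-2$ even iterates, edge degrees by odd iterates or by the degree-$(2N+n)$ orbit, the bottom degree $n$ by an orbit already counted). The real difficulty is not that the hyperbolic orbit fails to contribute, but that it \emph{does} contribute an unwanted even-degree chain generator that must be cancelled, and that the cancelling partner forces a new non-hyperbolic orbit; this is the idea your ``case analysis \dots should show'' leaves unproved. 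You would also need to treat $n=1$ separately (as the paper does), since the interior window is empty and the argument of Remark \ref{rmk:nonhyp4} requires $n\ge 3$.
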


By the discussion above we get the following corollary which generalizes previous results due to Long and Zhu \cite{LZ} and Wang \cite{Wa0,Wa2}.

\begin{corollary}
\label{cor:non-hyperbolic}
Let $\alpha$ be a dynamically convex contact form on $S^{2n+1}$ or $S^*S^{n+1}$. Suppose that $\alpha$ carries finitely many simple closed orbits. Then it carries at least one elliptic closed orbit. Moreover, if $\alpha$ is non-degenerate and $n$ is odd then it has at least $n+1$ geometrically distinct non-hyperbolic closed orbits.
\end{corollary}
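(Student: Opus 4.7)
The plan is to deduce both parts of the corollary directly from Theorems \ref{thm:elliptic} and \ref{thm:non-hyperbolic}, specialized to the two prequantizations in question. The topological data needed are those already recorded in the paragraph preceding Corollary \ref{cor:standard sphere}: $S^{2n+1}$ is the prequantization of $\CP^n$ with $c_{\CP^n} = n+1$, and $S^*S^{n+1}$ is the prequantization of $G^+_2(\R^{n+2})$ with $c_{G^+_2(\R^{n+2})} \geq n$. In both cases $c_1(\xi)|_{\pi_2(M)} = 0$ and the base is not symplectically aspherical, so the structural hypotheses of both theorems are in force. Moreover, for $n \geq 1$ the total space $M$ is simply connected (the only exception being $S^*S^2 = \RP^3$, which falls under the dimension-three regime already discussed in the remark following Theorem \ref{MainThm}), so that being dynamically convex coincides with being $0$-dynamically convex.

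Given these verifications, the first assertion follows by a direct application of Theorem \ref{thm:elliptic}: the finite-orbit assumption is exactly the missing hypothesis, and the conclusion is precisely the existence of at least one elliptic contractible closed orbit.

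For the second assertion, I would assume $\alpha$ non-degenerate and $n$ odd. The extra hypothesis $H_k(B;\Q) = 0$ for every odd $k$ required by Theorem \ref{thm:non-hyperbolic} is trivially true for $\CP^n$ and is explicitly noted in the text for $G^+_2(\R^{n+2})$. Applying Theorem \ref{thm:non-hyperbolic} then produces at least $r_B$ geometrically distinct non-hyperbolic contractible closed orbits, where $r_B = \dim H_*(B;\Q)$. For $\CP^n$ one has $r_B = n+1$; for $G^+_2(\R^{n+2})$ with $n$ odd one has $\lfloor n/2 \rfloor = (n-1)/2$, so $r_B = 2(\lfloor n/2 \rfloor + 1) = n+1$ as well. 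Both bounds coincide with the claimed $n+1$. There is no substantive new ingredient here: the whole content is carried by the two referenced theorems, and the only point to track is the uniform equality $r_B = n+1$ under the parity assumption on $n$, which is the sole reason that assumption enters the statement.
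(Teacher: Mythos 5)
Your proposal is correct and is essentially the paper's own argument: the corollary is obtained by feeding the homological data recorded before Corollary \ref{cor:standard sphere} (namely $c_{\CP^n}=n+1$, $c_{G^+_2(\R^{n+2})}\geq n$, vanishing odd Betti numbers, and $r_B=n+1$ for both bases when $n$ is odd) into Theorems \ref{thm:elliptic} and \ref{thm:non-hyperbolic}. The only cosmetic quibble is that $n$ odd is also a standing hypothesis of Theorem \ref{thm:non-hyperbolic} itself, not merely the reason $r_B=n+1$ for the Grassmannian, but this does not affect the argument.
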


\begin{remark}
The hypothesis of dynamical convexity in Theorems \ref{thm:elliptic} and \ref{thm:non-hyperbolic} and in Corollariy \ref{cor:non-hyperbolic} can be slightly relaxed; see Section \ref{sec:weak convexity}.
\end{remark}

\subsection{Multiplicity of periodic orbits for perfect Reeb flows}

To the best of our knowledge, so far all the known examples of Reeb flows on prequantizations with finitely many simple closed orbits are dynamically convex and carry exactly $r_B$ closed orbits. Therefore, we can ask the following

\vskip .2cm
\noindent
{\bf Question:} Let $M$ be a prequantization  of a closed symplectic manifold $(B,\om)$ such that $\om|_{\pi_2(B)} \neq 0$. Is it true that a contact form on $M$ carries either $r_B$ or infinitely many simple closed orbits?
\vskip .2cm

It is unknown even for $S^3$ and in this case this is a problem raised by Hofer, Wysocki and Zehnder \cite{HWZ}. The so called \emph{contact Hofer-Zehnder conjecture} establishes that a Reeb flow with a homologically unnecessary periodic orbit carries infinitely many simple closed orbits \cite{Gur}. Clearly, a non-degenerate contact form that is not dynamically convex has a homologically unnecessary periodic orbit. Therefore, one can hope that a contact form that is not dynamically convex carries infinitely many simple closed orbits. (We stress the fact that such a result is far from being known.) Consequently, one can weaken the first question in the following way:

\vskip .2cm
\noindent
{\bf Question:} Let $M$ be a prequantization  of a closed symplectic manifold $(B,\om)$ such that $\om|_{\pi_2(B)} \neq 0$. Is it true that a dynamically convex contact form on $M$ carries either $r_B$ or infinitely many simple closed orbits?
\vskip .2cm

This is true for the tight $S^3$ due to the seminal work of Hofer, Wysocki and Zehnder \cite{HWZ}. It should be mentioned here that, to the best of our knowledge, in all the known examples of prequantizations $M^{2n+1}$ admitting contact forms with finitely many simple closed orbits the basis is not negative monotone and has minimal Chern number $c_B$ less than or equal to $n+1$. Thus, it is conceivable that if $B$ is negative monotone or $c_B$ is big enough then we actually have infinitely many orbits. Indeed, in view of the resemblance with the Hamiltonian Conley conjecture mentioned in the introduction, the HCC holds for negative monotone symplectic manifolds \cite{GG11} and it is conjectured that if $c_B$ is big enough (e.g. $c_B > 2n$) then the HCC holds for $B$ \cite{GG14}.

These two questions seem to be pretty hard and completely out of reach so far. A possibly more feasible question is the following. As mentioned before, a perfect contact form is dynamically convex. There is no known example of a perfect contact form with infinitely many simple closed orbits. Therefore we can ask the following, which is a sort of very weak version of the second question:

\vskip .2cm
\noindent
{\bf Question:} Let $M$ be a prequantization  of a closed symplectic manifold $(B,\om)$ such that $\om|_{\pi_2(B)} \neq 0$. Is it true that a perfect  contact form on $M$ carries exactly $r_B$ simple closed orbits?
\vskip .2cm
 
When $M=S^3$ this is true due to the work of Bourgeois, Cieliebak and Ekholm \cite{BCE}. The next result gives a partial positive answer to this question. It was proved for the standard contact sphere by Gutt and Kang \cite{GK}. Before we state this, let us recall that a simple periodic orbit $\ga$ is \emph{even} if $\cz(\ga)$ and $\cz(\ga^2)$ have the same parity, that is, the even iterates of $\ga$ are good orbits.

\begin{theorem}
\label{thm:perfect}
Let $(M^{2n+1},\xi)$ be a simply connected prequantization of a closed symplectic manifold $(B,\om)$ such that $c_1(\xi)|_{\pi_2(M)}=0$, $\om|_{\pi_2(B)}\neq 0$ and $c_B\geq n$. Moreover, if $c_B=n$ suppose that $H_{k}(B;\Q)=0$ for every odd $k$. Then the Reeb flow of a perfect contact form supporting $\xi$ carries exactly $r_B$ simple even periodic orbits.
\end{theorem}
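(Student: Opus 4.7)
The plan is to combine the lower bound from Theorem \ref{MainThm}(B) with an upper bound produced by the common index jump theorem and the Bourgeois formula for the contact homology of a prequantization. Because $\alpha$ is perfect, it is by definition non-degenerate and the contact differential vanishes, so for every integer $d$ one has $\mathrm{rk}\, HC_d(M,\xi;\Q) = \#\{\text{good contractible orbits of index }d\}$; in particular $\alpha$ is $0$-dynamically convex. Since $M$ is simply connected every periodic orbit is contractible, so Theorem \ref{MainThm}(B) yields at least $r_B$ simple closed orbits $\gamma_1, \ldots, \gamma_N$ with $N \geq r_B$. Write $p$ for the number of them that are even; the goal is $p = r_B$.

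By the Bourgeois formula \cite{Bo},
\[
HC_\ast(M,\xi;\Q) \iso \bigoplus_{k \geq 1} H_{\ast - s_k}(B;\Q),
\]
with shifts $s_k$ determined by the Robbin--Salamon indices of the iterated principal orbits of the circle action defining $M$. The assumption $c_B \geq n$ forces the successive blocks $H_{\ast - s_k}(B;\Q)$ to occupy pairwise disjoint degree windows of width $2n$; in the borderline case $c_B = n$, the hypothesis that $H_k(B;\Q) = 0$ for every odd $k$ further forces each such window to live in degrees of a single parity. Combined with perfectness, each Bourgeois window then contains exactly $r_B$ good-orbit generators, all of matching parity in the borderline case. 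Next, apply the common index jump theorem of Long and Zhu (Section \ref{sec:CIJT}) to the simple orbits $\gamma_1, \ldots, \gamma_N$: it produces iteration exponents $m_1, \ldots, m_N$ and an integer $T$, chosen in a sufficiently dense arithmetic progression, such that $\cz(\gamma_j^{m_j}) \in [2T-(n-1), 2T+(n-1)]$ for every $j$. The linear spacing of the shifts $s_k$ lets us arrange $2T$ to lie at the center of some Bourgeois window $[s_k, s_k+2n]$.

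In this aligned window, every simple even orbit $\gamma_j$ contributes the good generator $\gamma_j^{m_j}$, and distinct even orbits contribute distinct generators; hence $p \leq r_B$. Simple odd orbits contribute good iterates only at odd powers, and these either fall outside the window or are of the wrong parity (using the parity concentration of the block together with the perfect differential structure), so they do not inflate the count. It follows that in fact all $N$ simple orbits are even, so $N = p = r_B$. The main technical obstacle is to carry out this last counting argument precisely: one must verify that each simple even orbit contributes \emph{exactly} one generator inside the CIJT window and that the window can always be aligned with a Bourgeois block of the correct parity. This relies on sharp iteration formulas for the Conley--Zehnder index together with the explicit formula for the shifts $s_k$ under the Chern class constraint $c_B \geq n$, and is also the place where the parity hypothesis on $H_\ast(B;\Q)$ in the borderline case is essential.
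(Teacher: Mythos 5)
Your plan (lower bound from Theorem \ref{MainThm}(B), upper bound from CIJT plus Bourgeois's formula and perfectness) is the right skeleton and matches the paper's strategy, but there are several concrete gaps that keep the argument from closing.

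First, the borderline case $c_B=n$ (equivalently $I=2n$, where $I=\rs(\varphi^{k_0})$) is mishandled. In that case the CIJT window $\{2N-n,\dots,2N+n\}$ meets two adjacent Bourgeois blocks, so $\sum_{j=-n}^{n}\dim HC^0_{2N+j}(\xi)=r_B+2$, not $r_B$: both endpoints $2N\pm n$ have rank $2$ rather than $1$. Your claim that ``each Bourgeois window then contains exactly $r_B$ good-orbit generators'' fails exactly here, and this is not a minor bookkeeping slip --- the whole upper bound hinges on it. The paper resolves it by singling out the orbit $\ga_1$ of minimal index $k_-=n$ and observing that \emph{three} of its iterates $\ga_1^{2m_1-1},\ga_1^{2m_1},\ga_1^{2m_1+1}$ land in the window (this uses \eqref{eq:CIJT2}, \eqref{eq:CIJT3} together with $\czl(\ga_1)=n$), so the orbits already found account for $r_B+2$ generators; a hypothetical extra even orbit then pushes the count above $r_B+2$, giving the contradiction. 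Without this refinement your upper bound is simply false when $c_B=n$.

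Second, the logical structure of the upper bound is off. You apply the CIJT to the orbits $\ga_1,\dots,\ga_N$ produced by Theorem \ref{MainThm}(B) and then try to bound the number of even ones among them; but nothing guarantees that list already contains \emph{every} simple even orbit, so $p\le r_B$ would not bound the quantity in the theorem. The clean route (the one the paper takes) is a contradiction: assume there is an $(r_B+1)$-th simple even orbit, apply the CIJT to the enlarged family $\ga_1,\dots,\ga_{r_B+1}$, note that $\ga_{r_B+1}^{2m_{r_B+1}}$ is good (evenness) and must land in $\{2N-n,\dots,2N+n\}$ by \eqref{eq:CIJT4}--\eqref{eq:CIJT5}, and then count. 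Relatedly, your lower bound is unfinished: you need to observe, as the paper does, that the $r_B$ orbits produced in the proof of Assertion~(B) have their $2m_j$-th iterates good, which is precisely what shows they are even; simply invoking Theorem \ref{MainThm}(B) gives $r_B$ simple orbits but not yet $r_B$ simple \emph{even} orbits.

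Finally, the closing inference ``it follows that in fact all $N$ simple orbits are even, so $N=p=r_B$'' is both unjustified and too strong. Nothing in the window argument rules out simple odd orbits existing (their even iterates are bad and invisible to contact homology, so the CIJT window says nothing about them), and indeed the paper explicitly poses as an open question whether the word ``even'' can be removed from the statement --- if your last sentence were correct it would answer that question. You should only conclude $p=r_B$, which is what the theorem asserts, and the hand-wave about odd orbits ``not inflating the count'' needs to be replaced by the observation that odd orbits are simply irrelevant to the contradiction since their even iterates do not contribute to $HC_\ast$.
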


\begin{remark}
It was proved by Gurel in \cite{Gur} that if $c_B>n$ then a perfect contact form carries at most $r_B$ simple even periodic orbits. Moreover, she showed that if $c_B=n$ then a perfect contact form has at most $r_B+2$ simple even periodic orbits. Thus, the previous theorem improves her bounds when $c_B=n$.
\end{remark}

It would be interesting to remove the word ``even" in the previous statement. This would give a positive answer to the previous question under the assumptions of Theorem \ref{thm:perfect}.

\subsection{The hypothesis of dynamical convexity}
\label{sec:weak convexity}

Although the hypothesis of dynamical convexity is important in the proofs of our results, it is probably just technical and should be relaxed (see the discussion in the beginning of the previous section). In fact, the proof of Theorem \ref{MainThm} shows that we can slightly relax this hypothesis and gives the following result. Assertion (B) was proved by Gutt and Kang \cite{GK} when $M$ is the standard contact sphere.

\begin{theorem}
\label{thm:weak convexity}
Let $(M^{2n+1},\xi)$ be a prequantization of a closed symplectic manifold $(B,\om)$ such that $c_1(\xi)|_{\pi_2(M)}=0$. Suppose that $\om|_{\pi_2(B)}\neq 0$, $c_B\geq n$ and that $H_{k}(B;\Q)=0$ for every odd $k$. Let $\alpha$ be a contact form supporting $\xi$ such that every contractible periodic orbit $\ga$ of $\alpha$ satisfies $\czl(\ga) \geq n$. Then the following assertions hold:
\begin{itemize}
\item[(A)] If $n\ne 2$ then the Reeb flow of $\alpha$ carries at least two geometrically distinct contractible periodic orbits. The same is true when $n=2$
provided that $M$ admits a strong symplectic filling $(W,\tilde\om)$ such that $\tilde\om|_{\pi_2(W)}=c_1(TW)|_{\pi_2(W)}=0$ and that 
$\pi_1(M)$ is torsion free.
\item[(B)] If $\alpha$ is non-degenerate then its Reeb flow carries at least $r_B$ geometrically distinct contractible periodic orbits.
\end{itemize}
\end{theorem}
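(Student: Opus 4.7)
The plan is to revisit the proof of Theorem \ref{MainThm} and verify that the weaker index bound $\czl(\ga) \geq n$ suffices at every step in which full $0$-dynamical convexity was invoked. The key enabler is the parity hypothesis $H_k(B;\Q)=0$ for odd $k$: combined with the Bourgeois computation of $HC^0_*(\xi)$ as shifted copies of $H_*(B;\Q)$ and the minimal Chern number assumption $c_B \geq n$, it forces $HC^0_*(\xi)$ to be supported in degrees of a single parity and to satisfy $k^0_-\geq n$. Thus the hypothesis $\czl(\ga)\geq n$ plays exactly the role of the lower bound coming from $0$-dynamical convexity in the original argument, and the parity condition further guarantees that every contributing iterate is automatically a good orbit.

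For Assertion (B) I would argue by contradiction, assuming there are at most $p<r_B$ simple contractible closed orbits $\ga_1,\dots,\ga_p$. Applying the common index jump theorem of Section \ref{sec:CIJT} to these orbits produces large integers $N$ and iterates $m_j$ for which all indices $\cz(\ga_j^{m_j})$ are forced into a narrow window around $2N$. The bound $\czl(\cdot)\geq n$ together with the monotone behaviour of the index under iteration limits the total number of iterates of the $\ga_j$ whose index falls in the slightly enlarged window $I_N=[2N-n+1,2N+n-1]$ to at most $p$ chain generators. On the other hand, by the Bourgeois isomorphism the rank of $HC^0_*(\xi)$ in the window $I_N$ equals exactly $r_B$ once $N$ is chosen compatibly with the Chern-number shift. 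This yields $p\geq r_B$, a contradiction.

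For Assertion (A) I would follow the strategy used to prove Theorem \ref{MainThm}(A). The parity hypothesis yields $\chi(B)=\sum_{k\text{ even}}\dim H_k(B;\Q)>0$, which takes over the role of the $\chi(B)\ne 0$ hypothesis of the main theorem. Arguing by contradiction from a single simple contractible closed orbit $\ga$, I would apply the common index jump theorem to $\ga$ and combine it with the fact that the local equivariant contact (or symplectic) homology of a simple SDM is concentrated in the top degree, so that a single orbit together with its iterates produces at most one generator per degree in the CIJT window. The contradiction with the rank of $HC^0_*(\xi)$ in that window closes the argument. The exclusion of $n=2$ comes from the foundational framework: the local $S^1$-equivariant symplectic homology of \cite{BO} is directly available under either a filling hypothesis or the index bound $\cz(\ga)+n-2>1$; the bound $\czl(\ga)\geq n$ gives $2n-2>1$ for $n\geq 3$, is automatic in the trivial dimension $n=1$, and fails to be strict only at $n=2$, where the filling assumptions in the statement must be imposed to set up the local theory.

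The main obstacle will be the fine counting in Assertion (B): one must check that the $p$ orbits and their iterates produce strictly fewer than $r_B$ good generators in a window of width $\approx 2n$ around $2N$, which requires a careful application of the CIJT windowing estimates and the mean-index monotonicity. The parity hypothesis is essential here to rule out cancellations between contributions of opposite parity; without it the argument only produces a lower bound in terms of $\chi(B)$, which is strictly weaker than $r_B$. Checking that the window in $HC^0_*(\xi)$ realises the full Poincar\'e polynomial of $B$ rather than a proper sub-sum is a delicate consequence of $c_B\geq n$ and will be the second technical point to verify.
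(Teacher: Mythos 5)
Your overall strategy (re-run the proof of Theorem \ref{MainThm} and check that $\czl(\ga)\geq n$ suffices) is the same as the paper's, but the two places where the argument actually has content are not handled correctly. For Assertion (B), your counting is off: by \eqref{eq:HC} the rank of $HC^0_*(\xi)$ in the open window $[2N-n+1,2N+n-1]$ is $r_B-2$, not $r_B$, since the copies of $H_0(B;\Q)$ and $H_{2n}(B;\Q)$ sit exactly at the endpoint degrees $2N\mp n$, where iterates other than $\ga_j^{2m_j}$ (namely $\ga_j^{2m_j\pm1}$, and possibly several iterates of equal index) may also land. So the window argument only reproduces Proposition \ref{prop:step1} and yields $p\geq r_B-2$; the last two orbits cannot be obtained by windowing alone. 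In the paper they come from Lemma \ref{lemma:key} (the resonance relation \eqref{eq:resonance} combined with the Morse inequalities of Proposition \ref{prop:morseineq}, forcing some $\ga_j^{2m_j}$ to generate degree $2N+n$, i.e.\ Proposition \ref{prop:step2}) and from Proposition \ref{prop:step3}, whose Case 1 needs the delicate lemma $\cz(\ga_0^{2m_0-2})<\cz(\ga_0^{2m_0-1})$ based on the refinement of the CIJT in Remark \ref{rmk:CIJT}. Moreover, in the weak-convexity setting with $I>2n$ the final orbit is produced by a different mechanism (Remark \ref{rmk:weak convexity 2}): one may assume there is a good orbit $\bar\ga$ with $\cz(\bar\ga)=n$, and since $HC^0_n(\xi)=0$ the differential forces a good orbit of index $n+1$, which has the wrong parity to be an iterate of $\ga_1,\dots,\ga_{r+1}$. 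None of these steps appears in your proposal, and your ``fine counting'' caveat does not fill them.

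For Assertion (A), your explanation of the $n=2$ hypothesis is incorrect. The extra assumptions (strong filling with $\tilde\om|_{\pi_2(W)}=c_1(TW)|_{\pi_2(W)}=0$ and $\pi_1(M)$ torsion free) are not foundational conditions needed to define local homology via the Bourgeois--Oancea bound $\cz(\ga)+n-2>1$; that issue is discussed in Remark \ref{rmk:transversality} and is unrelated here. The real point (Remark \ref{rmk:weak convexity 1}) is dynamical: running the proof of Assertion A with only $\czl(\ga)\geq n$, the case analysis on $S^+(1)$ eliminates every configuration except $n=2$ with $\ga$ totally degenerate and $S^+(1)=0$, in which case $\ga$ is an SDM; in the main theorem this leftover case is killed by the stronger bound $\czl(\ga)\geq k^0_-$, which is unavailable when $c_B>n$, so the paper instead invokes \cite[Theorem 1.2]{GHHM} (a simple SDM forces infinitely many closed orbits), and that theorem is what requires the filling and the torsion-free $\pi_1(M)$ (the latter also guaranteeing, via injectivity of $\pi_*\colon\pi_1(M)\to\pi_1(B)$, that the resulting orbits are contractible in $M$). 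Your sketched contradiction (``at most one generator per degree in the CIJT window'' against the rank of $HC^0$) would not work in any case, since in the relevant window the ranks are typically one per degree; the actual argument hinges on Lemma \ref{lemma:SDM} (via the resonance relation and $\chi(B)\neq0$, which your parity hypothesis does supply) and on the splitting-number estimates showing $\ga^{2m-1}$ cannot contribute to $HC^0_{2N+n-2}(\xi)$.
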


\begin{remark}
The hypothesis on $M$ when $n=2$ in Assertion A is due to the use of \cite[Theorem 1.2]{GHHM}. It is necessary only when $c_B>n$.
\end{remark}

\begin{remark}
The hypothesis $\czl(\ga) \geq n$ can be replaced by the condition $\czu(\ga) \leq -n$.
\end{remark}

Theorem \ref{thm:weak convexity} follows directly from the proof of Theorem \ref{MainThm}; see Remarks \ref{rmk:weak convexity 1} and \ref{rmk:weak convexity 2}. In a similar fashion, the hypothesis of dynamical convexity in Theorems \ref{thm:elliptic} and \ref{thm:non-hyperbolic} can be also relaxed by the assumption that every contractible closed orbit $\ga$ satisfies $\czl(\ga) \geq n$.

\section{Basic background on Contact Homology and Index Theory}
\label{sec:CH&index}

In this section we will present the basic background used in this work concerning the index of periodic orbits and contact homology.

\subsection{The Conley-Zehnder index for paths of symplectic matrices}

Let $\P(2n)$ be the set of paths of symplectic matrices $\Gamma:[0,1]\to Sp(2n)$ such that $\Gamma(0)=\text{Id}$, endowed with the $C^1$-topology. Consider the subset $\P^*(2n) \subset \P(2n)$ given by the non-degenerate paths, that is, paths $\Ga \in \P(2n)$ satisfying the additional property that $\Gamma(1)$ does not have $1$ as an eigenvalue. Following \cite{SZ}, one can associate to $\Gamma \in \P^*(2n)$ its Conley-Zehnder index $\cz(\Gamma) \in \Z$ uniquely characterized by the following properties:
\begin{itemize}
 \item {\bf Homotopy:} If $\Gamma_s$ is a homotopy of arcs in $\P^*(2n)$ then $\cz(\Gamma_s)$ is constant.
 \item {\bf Loop:} If $\phi: \R/\Z \rightarrow Sp(2n)$ is a loop at the identity and $\Gamma \in \P^*(2n)$ then $\cz(\phi\Gamma) = \cz(\Gamma) + 2\maslov(\phi)$, where $\maslov(\phi)$ is the Maslov index of $\phi$.
  \item {\bf Signature:} If $A \in \R^{2n\times 2n}$ is a symmetric non-degenerate matrix with all eigenvalues of absolute value less than $2\pi$ and $\Gamma(t)=\exp(J_0At)$, where $J_0$ is the canonical complex structure in $\R^{2n}$, then $\cz(\Ga)=\frac{1}{2}\text{Sign}(A)$.
\end{itemize}

There are different extensions of $\cz: \P^*(2n) \to \Z$ to degenerate paths in the literature. In this work we will use the Robbin-Salamon index $\rs: \P(2n) \to \frac{1}{2}\Z$ defined in \cite{RS} and lower and upper semicontinuous extensions denoted by $\czl$ and $\czu$ respectively. The later are defined in the following way: given $\Ga \in \P(2n)$ we set
\begin{equation}
\label{lower_CZ}
\czl(\Ga) = \sup_{U} \inf\{\cz(\Ga^\pr) \mid \Ga^\pr \in U \cap \P^*(2n)\} 
\end{equation}
and
\begin{equation}
\label{upper_CZ}
\czu(\Ga) = \inf_{U} \sup\{\cz(\Ga^\pr) \mid \Ga^\pr \in U \cap \P^*(2n)\}, 
\end{equation}
where $U$ runs over the set of neighborhoods of $\Ga$. The index $\czl$ coincides with the one defined in \cite[Definition 6.1.10]{Lon02}. We have the relations
\begin{equation}
\label{eq:czl x czu 1}
\czl(\Ga^{-1}) = -\czu(\Ga)
\end{equation}
and
\begin{equation}
\label{eq:czl x czu 2}
\czu(\Ga) = \czl(\Ga) + \nu(\Ga)
\end{equation}
for every $\Ga \in \P(2n)$, where $\nu(\Ga)$ is the geometric multiplicity of the eigenvalue one of $\Ga(1)$. Indeed, the first equality follows immediately from the fact that if $\Ga \in \P^*(2n)$ then $\cz(\Ga^{-1})=-\cz(\Ga)$ and the second one is a consequence of \cite[Theorem 6.1.8]{Lon02}.

\subsection{The Conley-Zehnder index of periodic Reeb orbits}
\label{sec:index_orbits}

Let $\ga$ be a periodic orbit of the Reeb vector field $R_\alpha$ and let $\Phi_t: \xi(\ga(t)) \to \R^{2n}$ be a symplectic trivialization of $\xi$ over $\ga$. Using this trivialization, the linearized Reeb flow furnishes the symplectic path
\begin{equation}
\Gamma(t) = \Phi(\gamma(t)) \circ d\phi_\alpha^t(\gamma(0))|_\xi \circ \Phi^{-1}(\gamma(0)),
\end{equation}
where $\phi^t_\alpha$ is the Reeb flow of $\alpha$. In this way, we have the indexes $\rs(\ga;\Phi)$, $\czl(\ga;\Phi)$ and $\czu(\ga;\Phi)$ which coincide with $\cz(\ga;\Phi)$ if $\ga$ is non-degenerate, that is, if its linearized Poincar\'e map does not have one as eigenvalue. It turns out that the parities of the Conley-Zehnder indexes of the even/odd iterates of a periodic orbit are the same, that is,
\[
\mu_{\text{CZ}}^\pm(\ga^{2j};\Phi^{2j}) \equiv \mu_{\text{CZ}}^\pm(\ga^{2k};\Phi^{2k})\ \text{and}\  
\mu_{\text{CZ}}^\pm(\ga^{2j-1};\Phi^{2j-1}) \equiv \mu_{\text{CZ}}^\pm(\ga^{2k-1};\Phi^{2k-1})\ \text{mod 2}\ \forall\ j,k \in \N,
\]
see \cite{Lon02} and \eqref{eq:czl x czu 1}. The mean index of $\ga$ is defined as
\[
\Delta(\ga;\Phi) = \lim_{k\to\infty}\frac{1}{k} \czl(\ga^k;\Phi^k).
\]
This limit exists. In fact, following \cite{Lon99,Lon02}, one can associate to $\Gamma$ Bott's index function $\Bott: S^1 \to \Z$ which satisfies the following properties:
\begin{itemize}
\item[(a)] $\czl(\ga^k;\Phi^k) = \sum_{z \in S^1;\ z^k=1} \Bott(z)$ for every $k \in \N$ (Bott's formula).
\item[(b)] The discontinuity points of $\Bott$ are contained in $\sigma(P_\ga) \cap S^1$, where $P_\ga$ is the linearized Poincar\'e map of $\ga$ and $\sigma(P_\ga)$ is its spectrum.
\item[(c)] $\Bott(z)=\Bott(\bar z)$ for every $z \in S^1$.
\item[(d)] The \emph{splitting numbers} $S^\pm(z) := \lim_{\ep\to 0^+} \Bott(e^{\pm i\ep}z)-\Bott(z)$ depend only on $P_\ga$ and satisfy
\begin{itemize}
\item[(d1)] $0 \leq S^\pm(z) \leq \nu(z)$ for every $z \in \sigma(P_\ga) \cap S^1$, where $\nu(z)$ is the geometric multiplicity of $z$;
\item[(d2)] $S^\pm(z)=S^\mp(\bar z)$ for every $z \in S^1$.
\item[(d3)] If $P_\gamma$ is given by a symplectic sum $P_1 \oplus P_2$ then the corresponding splitting numbers satisfy $S^\pm(z) = S^\pm_1(z) + S^\pm_2(z)$ for every $z \in S^1$.
\item[(d4)] If $P_\ga$ is the identity then $S^\pm(1)=n$.
\end{itemize}
\end{itemize}
For a definition of $\Bott$ and a proof of these properties we refer to \cite{Lon99,Lon02}. It follows from Bott's formula that
\[
\lim_{k\to\infty}\frac{1}{k} \czl(\ga^k;\Phi^k) = \frac{1}{2\pi} \int_{S^1} \Bott(z) \,dz.
\]
This shows, in particular, that the mean index is well defined. The mean index is continuous with respect to the $C^2$-topology in the following sense: if $\alpha_j$ is a sequence of contact forms converging to $\alpha$ in the $C^2$-topology and $\ga_j$ is a sequence of periodic orbits of $\alpha_j$ converging to $\ga$ then $\Delta(\ga_j) \xrightarrow{j\to\infty} \Delta(\ga)$ \cite{SZ}. It is well known that the mean index satisfies the inequality
\begin{equation}
\label{eq:mean index}
|\cz(\tilde\ga;\Phi) - \Delta(\ga;\Phi)| \leq n
\end{equation}
for every closed orbit $\ga$ and non-degenerate perturbation $\tilde\ga$ of $\ga$. (To be precise, $\Phi$ defines in a natural way a trivialization of $\xi$ over $\tilde\ga$, that we also denote by $\Phi$, which is unique up to homotopy.) Moreover, if $\ga$ is weakly non-degenerate (that is, if $\sigma(P_\ga) \neq \{1\}$) then the inequality above is strict. By \cite[Theorem 6.1.8]{Lon02} we also have the relation
\begin{equation}
\label{eq:index and nullity}
|\cz(\tilde\ga;\Phi) - \czl(\ga;\Phi)| \leq \nu(\ga),
\end{equation}
where $\nu(\ga)$ is the nullity of $\ga$, that is, the geometric multiplicity of the eigenvalue one of $P_\ga$; see \eqref{eq:czl x czu 2}. 

If we choose another trivialization $\Psi_t: \xi(\vr(t)) \to \R^{2n}$ over $\ga$ then we have the relations
\[
\mu_{\text{CZ}}^\pm(\ga;\Psi) = \mu_{\text{CZ}}^\pm(\ga;\Phi) + 2\maslov(\Psi_t \circ \Phi_t^{-1}).
\]
In particular, the parity of the index does not depend on the choice of the trivialization.

If $\ga$ is contractible, there is a way to choose the trivialization $\Phi$ unique up to homotopy if $c_1(\xi)|_{\pi_2(M)}=0$. In fact, consider a capping disk of $\ga$, that is, a smooth map $\sigma: D^2 \to M$, where $D^2$ is the two-dimensional disk, such that $\sigma|_{\partial D^2} = \ga$. Choose a trivialization of $\sigma^*\xi$ and let $\Phi$ be its restriction to the boundary, which gives a trivialization of $\xi$ over $\ga$. Since $D^2$ is contractible, the homotopy class of $\Phi$ does not depend on the choice of the trivialization of $\sigma^*\xi$. Moreover, the condition $c_1(\xi)|_{\pi_2(M)}=0$ ensures that the homotopy class of $\Phi$ does not depend on the choice of $\sigma$ as well.

\subsection{Contact homology}
\label{sec:CH}

Cylindrical contact homology is an invariant of the contact structure introduced by Eliashberg, Givental and Hofer \cite{EGH} that we will now very briefly recall. Let $\alpha$ be a non-degenerate contact form supporting $\xi$, that is, such that every closed orbit is non-degenerate. A periodic orbit of $\alpha$ is called good if its index has the same parity of the index of the underlying simple closed orbit (as noticed before, the parity of the index does not depend on the choice of the trivialization of $\xi$). A periodic orbit that is not good is called bad. Consider the chain complex $CC_*(\alpha)$ given by the graded group with coefficients in $\Q$ generated by good periodic orbits of $R_\alpha$ graded by their Conley-Zehnder indexes (throughout this paper we are not using the standard convention where the grading of contact homology is given by the Conley-Zehnder index plus $n-2$). The differential is defined counting holomorphic cylinders asymptotic to periodic orbits in the symplectization of $M$. Clearly, the chain complex depends on $\alpha$ but it turns out that, under suitable transversality assumptions, the corresponding homology $HC_*(\alpha)$ is an invariant of the contact structure (see Remark \ref{rmk:transversality}).

There are two natural filtrations in cylindrical contact homology in terms of the action and the free homotopy classes of the periodic orbits. More precisely, let $A_\alpha(\ga) := \int_\gamma \alpha$ be the action of a periodic orbit $\ga$ and take a real number $T>0$. Consider the chain complex $CC^{a,<T}_*(\alpha)$ (resp. $CC^{a,\leq T}_*(\alpha)$) generated by good periodic orbits of $R_\alpha$ with free homotopy class $a$ and action less than (resp. less than or equal to) $T$. This is a subcomplex of $CC_*(\alpha)$ and its corresponding homology is denoted by $HC^{a,<T}_*(\alpha)$ (resp. $HC^{a,\leq T}_*(\alpha)$). Given $T^\pr>T$, the complex $CC^{a,\leq T}_*(\alpha)$ is a subcomplex of $CC^{a,<T^\pr}_*(\alpha)$ and therefore we can consider the homology of the quotient $CC^{a,<T^\pr}_*(\alpha)/CC^{a,\leq T}_*(\alpha)$ denoted by $HC^{a,(T,T^\pr)}_*(\alpha)$. The filtered contact homology does depend on $\alpha$. However, given another non-degenerate contact form $\alpha^\pr$ and $T,T^\pr \in \R$ we have that $HC^{a,(T,T^\pr)}_*(\alpha) \cong HC^{a,(T,T^\pr)}_*(\alpha^\pr)$ if there exists a family of contact forms $\alpha_s$, $s \in [0,1]$, such that $\alpha_0=\alpha$, $\alpha_1=\alpha^\pr$ and $T, T^\pr \notin \Sigma(\alpha_s)$ for every $s$, where $\Sigma(\alpha_s) := \{A_{\alpha_s}(\gamma); \gamma\text{ is a periodic orbit of }\alpha_s\}$ is the action spectrum of $\alpha_s$. This enables us to define the filtered contact homology $HC^{a,(T,T^\pr)}_*(\alpha)$ of a possibly degenerate contact form $\alpha$ as $HC^{a,(T,T^\pr)}_*(\alpha^\pr)$ whenever $T,T^\pr \notin \Sigma(\alpha)$, where $\alpha^\pr$ is a non-degenerate $C^\infty$-perturbation of $\alpha$.

Let $b^a_i = \dim HC^a_i(\xi)$ be the $i$-th Betti number and assume that there exist integers $i_-$ and $i_+$ such that $b^a_i$ is finite for every $i\leq i_-$ and $i\geq i_+$. Under this assumption the positive/negative mean Euler characteristic is defined as
\begin{equation}
\label{eq:def_MEC}
\chi^a_\pm(\xi) = \lim_{j\to\infty} \frac{1}{j} \sum_{i=|i_\pm|}^j (-1)^i b^a_{\pm i}
\end{equation}
provided that this limit exists.

\subsection{Local contact homology}
\label{sec:LCH}

An important tool throughout this paper is local contact homology introduced in \cite{HM}. These homology groups are building blocks for the global contact homology and its definition goes as follows. As in the global case, we will be very sketchy and we refer to \cite{HM} for details. Let $\alpha$ be any contact form representing $\xi$ and $\ga$ a (possibly degenerate) isolated closed orbit of $\alpha$. Consider a sufficiently small tubular neighborhood $U \cong B \times S^1$ of $\ga$, where $B \subset \R^{2n}$ is a ball, and write $\ga=\psi^k$, where $\psi$ is the underlying simple closed orbit. Let $\alpha^\pr$ be a non-degenerate $C^\infty$-perturbation of $\alpha$. The local contact homology $HC_*(\ga)$ is given by the chain complex generated by the good periodic orbits of $\alpha^\pr$ in $U$ with homotopy class $k[\psi]$, where $[\psi]$ is the homotopy class of $\psi$ in $U$. The differential counts holomorphic cylinders asymptotic to these orbits in the symplectization of $U$ with respect to a suitable almost complex structure $J$. It turns out that the corresponding homology does not depend on the choices of $U$ and $J$ and it is invariant by isolated deformations of $\ga$, see \cite{HM}.

Let us summarize here some properties of $HC_*(\ga)$ that will be useful in this work and whose proofs can be found in \cite{HM}. Suppose that every periodic orbit of $\alpha$ with free homotopy class $a$ is isolated. Given $T \in \Sigma(\alpha)$ there exists $\ep>0$ such that
\[
HC_*^{a,(T-\ep,T+\ep)}(\alpha) \cong \bigoplus_{\ga} HC_*(\ga),
\]
where the sum runs over all the periodic orbits $\ga$ of $\alpha$ with free homotopy class $a$ and action $T$. For a non-degenerate periodic orbit $\gamma$ we have that 
\begin{equation*}
HC_{*}(\gamma) = \begin{cases}
\Q & \text{if}\ *=\cz(\ga)\ \text{and}\ \ga\ \text{is good} \\
0 & \text{otherwise}.
\end{cases}
\end{equation*}
For a general closed orbit $\ga$, it follows immediately from \eqref{eq:mean index}, \eqref{eq:index and nullity} and the definition of local contact homology that 
\begin{equation}
\label{eq:support CH}
HC_k(\ga)=0\ \text{whenever}\ k \notin [\Delta(\ga)-n,\Delta(\ga)+n] \cap [\czl(\ga),\czl(\ga)+\nu(\ga)].
\end{equation}
The local Euler characteristic of $\ga$ is defined as
\[
\chi(\gamma) = \sum_{i \in \Z} (-1)^i\dim HC_i(\gamma).
\]
The sum above is finite. The local {\it mean} Euler characteristic of $\gamma$ is defined as
\[
\hat\chi(\gamma) = \lim_{m\to\infty} \frac{1}{m} \sum_{k=1}^m \chi(\gamma^k).
\]
The limit above exists and is rational. In fact, it is proved in \cite{GGo} that the sequence $k \mapsto \chi(\ga^k)$ is periodic. Thus, if we denote by $\mper$ its period we have that
\[
\hat\chi(\ga) = \frac{1}{\mper} \sum_{k=1}^\mper \chi(\ga^k).
\]
Suppose now that $\alpha$ has finitely many simple closed orbits $\ga_1,\dots,\ga_q$ with negative/positive mean index and free homotopy class $a$ (simple here means that $\ga_j$ is not a non-trivial covering of a periodic orbit $\psi$ with free homotopy class $a$). Then, as proved in \cite{GGo}, the corresponding limit in \eqref{eq:def_MEC} exists and it is proved in \cite{HM} that
\begin{equation}
\label{eq:resonance}
\sum_{j=1}^{q} \frac{\hat\chi(\gamma_j)}{\Delta (\gamma_j)} = \chi_\pm^a(\xi).
\end{equation}

\begin{remark}
Although the precise statement in \cite{HM} is not for a fixed free homotopy class, the proof goes in a straightforward way to show the resonance relation \eqref{eq:resonance}.
\end{remark}

\subsection{Strongly degenerate maximum}

We say that an isolated closed orbit $\ga$ is a strongly degenerate maximum (SDM) if $\Delta(\ga) \in 2\Z$ and $HC_{\Delta(\ga)+n}(\ga) \neq 0$. It follows from the strict inequality in \eqref{eq:mean index} for weakly non-degenerate periodic orbits that $\ga$ must be totally degenerate, that is, $\sigma(P_\ga) = \{1\}$. Although we will not use this in the proof of Theorem \ref{MainThm}, we should mention that the importance of an SDM comes from \cite[Theorem 2]{GHHM} which states that, under some assumptions on $M$, the presence of a simple SDM forces the existence of infinitely many simple closed orbits.

In this work we will use the following properties of an SDM whose proofs can be found in \cite{GHHM}. Firstly, that if $\ga$ is an SDM then its local contact homology is concentrated in degree $\Delta(\ga)+n$. More precisely, we have that $HC_{\Delta(\ga)+n}(\ga) \cong \Q$ and $HC_k(\ga)=0$ for every $k \neq \Delta(\ga)+n$. Secondly, that a totally degenerate closed orbit $\ga$ is an SDM if and only if $\ga^k$ is an SDM for some $k \in \N$.

Finally, an isolated closed orbit $\ga$ is a strongly degenerate minimum (SDMin) if $\Delta(\ga) \in 2\Z$ and $HC_{\Delta(\ga)-n}(\ga) \neq 0$. The SDMin has similar properties to those of an SDM. In particular, its local contact homology is concentrated in degree $\Delta(\ga)-n$ and a totally degenerate closed orbit $\ga$ is an SDMin if and only if $\ga^k$ is an SDMin for some $k \in \N$; see \cite{GHHM}.

\section{The common index jump theorem}
\label{sec:CIJT}

The common index jump theorem (CIJT) proved by Long and Zhu \cite{LZ} will be a crucial ingredient in this work. Roughly speaking, this theorem establishes that given a finite collection of periodic orbits $\ga_1,\dots,\ga_q$ with positive mean index there are arbitrarily large iterates of these orbits whose indexes jump a common interval of degrees whose size depends on a lower bound for the indexes of $\ga_1,\dots,\ga_q$. Moreover, the theorem provides precise estimates of the indexes and nullities of these iterates.

In what follows, we will use several definitions and notation from Section \ref{sec:index_orbits}. Before we state the theorem, let us recall that given a periodic orbit $\ga$ its elliptic height $e(\ga)$ is defined as the sum of the algebraic multiplicities of the eigenvalues of $P_\ga$ in the unit circle. In the statement, recall that given $c \in \R$ we define $\lfloor c \rfloor = \max\{k \in \Z;\, k \leq c\}$.

\begin{theorem}[Common index jump theorem \cite{LZ}]
Let $\ga_1,\dots,\ga_q$ be closed orbits with positive mean index. Define $J = \{1,\dots,q\}$ and let $\Pj$ be the linearized Poincar\'e map of $\ga_j$. Fix $N_0 \in \N$ and $\q \in \N$ such that $\q\theta/\pi \in \Z$ whenever $e^{i\theta}$ is an eigenvalue of $\Pj$ such that $\theta/\pi \in \Q$ for every $j \in J$. There exist $N = k N_0$ for some $k\in\N$ and $m_j \in \N$ for each $j\in J$ such that
\begin{equation}
\label{eq:CIJT1}
\nu(\gamma_j) = \nu(\gamma_j^{2m_j-1}) = \nu(\gamma_j^{2m_j+1}),
\end{equation}
\begin{equation}
\label{eq:CIJT2}
\lo (\gamma_j^{2m_j-1}) = 2N - \lo (\gamma_j) - 2S_j^+(1),
\end{equation}
\begin{equation}
\label{eq:CIJT3}
\lo (\gamma_j^{2m_j+1}) = 2N + \lo (\gamma_j),
\end{equation}
\begin{equation}
\label{eq:CIJT4}
\lo(\gamma_j^{2m_j}) \geq 2N - \frac{e(\ga_j)}{2} \geq 2N - n,
\end{equation}
and
\begin{equation}
\label{eq:CIJT5}
\lo(\gamma_j^{2m_j}) + \nu(\gamma_j^{2m_j}) \leq 2N + \frac{e(\ga_j)}{2} \leq 2N + n,
\end{equation}
for every $j \in J$, where $e(\ga_j)$ is the elliptic height of $\ga_j$ and $S_j^+(1)$ is the splitting number of the eigenvalue one of $\Pj$. Moreover, given an arbitrarily small $\epsilon > 0$ one can choose $m_j$ of the form
\begin{equation}
\label{eq:m}
m_j = \left(\left\lfloor\frac{N}{\q\Delta (\gamma_j)}\right\rfloor + \delta_j\right) \q
\end{equation}
such that
\begin{equation}
\label{eq:epsilon}
\left| \frac{N}{\q\Delta (\gamma_j)} - \left\lfloor\frac{N}{\q\Delta(\gamma_j)}\right\rfloor - \delta_j \right| < \epsilon\,,
\end{equation}
where $\delta_j\in\{0,1\}$.
\end{theorem}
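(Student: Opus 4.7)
My plan is to deduce the Common Index Jump Theorem from the Bott-type iteration formula together with a simultaneous Diophantine approximation argument on the spectral data of the Poincaré maps. Associated to each $\gamma_j$ we have the Bott index function $\Bott_j : S^1 \to \Z$ satisfying $\czl(\gamma_j^k) = \sum_{z^k=1} \Bott_j(z)$ for every $k \in \N$, with $\Delta(\gamma_j) = \frac{1}{2\pi}\int_{S^1}\Bott_j(z)\,dz$, and whose discontinuities lie in $\sigma(\Pj)\cap S^1$ with jumps controlled by the splitting numbers $S_j^\pm(z)$, so that the total oscillation of $\Bott_j$ is bounded by $e(\gamma_j)/2 \le n$. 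The idea is to choose the iterate $2m_j$ so that the Riemann-type sum $\sum_{z^{2m_j}=1} \Bott_j(z)$ computing $\czl(\gamma_j^{2m_j})$ is close to $2m_j\Delta(\gamma_j)$, and moreover that the rotated samples $e^{i\theta/m_j}z$ arising from the iterates $2m_j\pm 1$ are situated consistently with respect to the discontinuities of $\Bott_j$ for every $j$ at once.

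To carry this out, I would first let $e^{\pm 2\pi i\alpha_{j,\ell}}$ $(\ell=1,\dots,r_j)$ be the eigenvalues of $\Pj$ on $S^1$, and use the integer $\q$ in the hypothesis to ensure that $\q\alpha_{j,\ell}\in \Z$ whenever $\alpha_{j,\ell}\in\Q$. Writing the target ratio $N/(\q\Delta(\gamma_j))=:x_j$, I would apply Kronecker's simultaneous approximation theorem to the collection of irrational $\alpha_{j,\ell}$'s to obtain a positive integer $N$, arbitrarily large and a multiple of the preassigned $N_0$, such that $\|x_j\alpha_{j,\ell}\|<\epsilon'$ for all $j,\ell$, where $\|\cdot\|$ denotes distance to the nearest integer. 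Setting $m_j=(\lfloor x_j\rfloor+\delta_j)\q$ with $\delta_j\in\{0,1\}$ chosen so that \eqref{eq:epsilon} holds gives the required form. By construction $m_j\alpha_{j,\ell}$ is then within $O(\epsilon)$ of an integer for every $j,\ell$, so that the points $\{e^{2\pi i t/(2m_j\pm 1)}: t\in \Z\}$ hit each discontinuity of $\Bott_j$ in a predictable manner relative to $2m_j\Delta(\gamma_j)=2N+O(\epsilon)$.

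With $m_j,N$ fixed, the identities \eqref{eq:CIJT1}--\eqref{eq:CIJT3} follow from the symmetry $\Bott_j(z)=\Bott_j(\bar z)$ and the jump relations $S_j^\pm(z)=S_j^\mp(\bar z)$: Bott's formula breaks $\czl(\gamma_j^{2m_j\pm 1})$ into the contribution at $z=1$ and the contributions at the remaining $(2m_j\pm 1)$-th roots of unity, whose sum computes $2N\pm\czl(\gamma_j)$ up to an error that is absorbed by the splitting number $2S_j^+(1)$ (and vanishes for the $+1$ case by the reflection symmetry). The inequalities \eqref{eq:CIJT4}--\eqref{eq:CIJT5}, together with the nullity identity, follow from the bound $|\czl(\gamma_j^{2m_j})-2m_j\Delta(\gamma_j)|\le e(\gamma_j)/2$, which in turn comes from bounding the deficit of the Riemann sum against the integral by the total variation of $\Bott_j$ on $S^1$. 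The main obstacle, and where I would spend most of the care, is the simultaneous Diophantine step: one must choose $N$ both to be a multiple of $N_0$ and to approximate \emph{all} of the finitely many irrational angles $\alpha_{j,\ell}$ to accuracy $\epsilon$ while keeping the $\delta_j\in\{0,1\}$; this requires applying Kronecker's theorem inside the subgroup generated by $1/N_0$ together with a uniform choice of $\epsilon'$ small enough (relative to the minimal gap between the $\alpha_{j,\ell}$ and their negatives) so that the discontinuity structure of each $\Bott_j$ is aligned correctly for all $j$ at the same iterate.
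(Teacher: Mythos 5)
This theorem is not proved in the paper: it is quoted from Long and Zhu \cite{LZ}, with the addendum \eqref{eq:m}--\eqref{eq:epsilon} (and the angle control recorded in Remark \ref{rmk:CIJT}) extracted from their proof as in \cite[Theorem 3.8]{Wa1}. Your outline follows the same broad strategy as Long--Zhu (index iteration formula plus a simultaneous approximation argument), but two of its key steps fail as written. First, the Diophantine step: Kronecker's theorem gives density of $(\{Nv_1\},\dots,\{Nv_h\})$ in the torus only when $1,v_1,\dots,v_h$ are linearly independent over $\Q$, and here the relevant vector must contain both the numbers $1/(\q\Delta(\ga_j))$ (this is what \eqref{eq:epsilon} is about) and the numbers $\theta_{i,j}/(\pi\Delta(\ga_j))$; these satisfy rational relations in general (the mean index is itself built out of $\czl(\ga_j)$, the splitting numbers and the angles, and may even be rational), so density fails. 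Long--Zhu replace Kronecker by the observation that the closure of $\{(\{Nv_1\},\dots,\{Nv_h\})\,:\,N\in\N\}$ is a closed subgroup of the torus, hence symmetric under inversion, which produces infinitely many $N$ (multiples of a prescribed $N_0$) with every coordinate within $\ep$ of $0$ \emph{or} $1$ but with no control over which alternative occurs --- this dichotomy is exactly the origin of $\delta_j\in\{0,1\}$. Moreover, you try to secure \eqref{eq:epsilon} ``by choosing $\delta_j$'', but \eqref{eq:epsilon} is a constraint on $N$: unless $\{N/(\q\Delta(\ga_j))\}$ already lies within $\ep$ of $0$ or $1$, no choice of $\delta_j\in\{0,1\}$ makes it true, so these coordinates cannot be omitted from the approximation step as you do when you approximate only the angles $\alpha_{j,\ell}$.

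Second, the index computations. The exact identities \eqref{eq:CIJT2}--\eqref{eq:CIJT3} do not follow from Bott's formula plus the symmetry $\Bott_j(z)=\Bott_j(\bar z)$ with an error ``absorbed'' by $2S_j^+(1)$: they require Long's precise iteration formula, which expresses $\czl(\ga_j^m)$ through the splitting numbers at \emph{every} unit eigenvalue, combined with the fact that each $\{m_j\theta_{i,j}/\pi\}$ is within $\delta$ of $0$ or $1$; only a case-by-case bookkeeping of these contributions yields the exact values $2N+\czl(\ga_j)$ and $2N-\czl(\ga_j)-2S_j^+(1)$, and the asymmetry between the iterates $2m_j+1$ and $2m_j-1$ is precisely the content of that computation, not a formal consequence of reflection symmetry. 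Likewise, a Riemann-sum-versus-total-variation estimate only bounds $|\czl(\ga_j^{k})-k\Delta(\ga_j)|$ by the total variation of $\Bott_j$, i.e.\ by a sum of all splitting numbers, which need not be $e(\ga_j)/2$, and it says nothing about $\czl+\nu$; the inequalities \eqref{eq:CIJT4}--\eqref{eq:CIJT5} rest on the sharp estimates of Long and Liu--Long comparing $\czl(\ga^m)$ and $\czl(\ga^m)+\nu(\ga^m)$ with $m\Delta(\ga)$ up to $e(\ga)/2$ (see \cite{Lon02}), applied together with $2m_j\Delta(\ga_j)=2N+O(\ep)$ and an integrality argument that needs $\ep$ small relative to $\q$ and the $\Delta(\ga_j)$. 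Finally, \eqref{eq:CIJT1} requires the (easy, but missing) observation that $m_j\in\q\N$ forces $2m_j\pm1$ to be coprime to the order of every root-of-unity eigenvalue of $\Pj$, so no new eigenvalue-one directions appear at those iterates.
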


\begin{remark}
\label{rmk:CIJT}
Although the statement of the CIJT in \cite{LZ} does not contain explicitly the assertions \eqref{eq:m} and \eqref{eq:epsilon}, these follow directly from the proof, see \cite[Theorem 3.8]{Wa1}. Moreover, it also follows from the proof of the CIJT that given $\delta>0$, $m_j$ can be chosen such that
\[
\min \left\{ \frac{m_j \theta_{i,j}}{\pi} - \left\lfloor \frac{m_j \theta_{i,j}}{\pi} \right\rfloor\,,\ 
1 - \left( \frac{m_j \theta_{i,j}}{\pi} - \left\lfloor \frac{m_j \theta_{i,j}}{\pi}\right\rfloor\right)\right\} < \delta\,.
\]
for every $j \in J$ and every eigenvalue $e^{\sqrt{-1}\theta_{i,j}}$, $\theta_{i,j} \in [0,2\pi)$, of the linearized Poincar\'e map of $\gamma_j$ with absolute value equal to $1$.
\end{remark}

\section{Proof of the Main Theorem}
\label{sec:proof mainthm}

Along the proof we will use several ideas from \cite{Wa1}. Firstly, notice that it is enough to consider the case that $\alpha$ is positively $0$-dynamically convex. Indeed, using \eqref{eq:czl x czu 1} and \eqref{eq:czl x czu 2} one can check that the same argument applies {\it mutatis mutandis} to the case that $\alpha$ is negatively $0$-dynamically convex; see Remark \ref{rmk:negative convexity}. Thus, we will consider only the positive mean Euler characteristic $\chi^0_+(\xi)$ defined in Section \ref{sec:CH} and omit the subscript $+$ in the notation.

Let $\beta$ be the contact form on $M$  whose Reeb vector field $R_\beta$ generates a free circle action with minimal period normalized equal to one. Let $\varphi$ be a simple closed orbit of $R_\beta$. Since $\omega|_{\pi_2(B)} \ne 0$, there exists a minimal $k_0 \in \N$ such that $\varphi^{k_0}$ is contractible. The cylindrical contact homology of $(M,\xi)$ for contractible orbits is given by
\begin{equation}
\label{eq:CH}
HC^0_\ast (\xi) \cong \oplus_{k\in\N} H_{\ast -kI +n} (B; \Q)\,,
\end{equation}
where $I:= \rs (\varphi^{k_0})$ is the Robbin-Salamon index of $\varphi^{k_0}$, see \cite{Bo}. Hence, the minimal degree $k^0_-$ with nonzero contact homology is given by $k^0_- = I - n$. It is easy to see that $I$ is an even number greater than or equal to $2c_B$, see \cite{AM}. Our hypotheses imply that
\[
I \geq 2n \quad\text{and}\quad k^0_- = I - n \geq n\,.
\]
In what follows, {\it a closed orbit means a contractible closed orbit and a simple closed orbit means the smallest contractible multiple of a simple (not necessarily contractible) closed orbit unless it is explicitly stated}.

Assume that $\alpha$ has finitely many simple closed orbits and let $\Pp = \{\ga_1,\dots,\ga_q\}$ be the set of simple orbits with positive mean index and $J=\{1,\dots,q\}$. Given $N_0$, $\q$ and $\ep>0$, we can apply the CIJT to $\ga_1,\dots,\ga_q$ and get natural numbers $(N,m_1,\dots,m_q)$ satisfying \eqref{eq:CIJT1}, \eqref{eq:CIJT2}, \eqref{eq:CIJT3}, \eqref{eq:CIJT4}, \eqref{eq:CIJT5}, \eqref{eq:m} and \eqref{eq:epsilon}. Theorem 2.2 from \cite{LZ} tells us that
\begin{equation}
\label{eq:indexest}
\nu(\gamma_j^{m}) - \frac{e(\gamma_j)}{2} \leq \lo(\gamma_j^{m+1}) - \lo(\gamma_j^{m}) - \lo(\gamma_j)\,,\ \forall\  m\in\N\,,\ 
j\in J\,.
\end{equation}
Under our hypotheses, this implies the following monotonicity property:
\begin{equation}
\label{eq:indexmon}
\lo(\gamma_j^{m+1}) - \lo(\gamma_j^m) \geq k^0_- - n \geq 0\,,\ \text{i.e.}\quad
\lo(\gamma_j^{m+1}) \geq \lo(\gamma_j^m) \,,\ \forall\  m\in\N\,,\ j\in J\,.
\end{equation}
The following lemma will be a keystone in the proof of our main theorem.

\begin{lemma}
\label{lemma:key}
$N_0$, $\q$ and $\ep$ can be chosen such that there exists $j \in J$ with $HC_{2N+n}(\ga_j^{2m_j}) \neq 0$.
\end{lemma}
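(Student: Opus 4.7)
Plan. My approach is to argue by contradiction, choosing the CIJT parameters so that the global contact homology in degree $2N+n$ is forced to be non-zero while only the iterates $\ga_j^{2m_j}$ can possibly realise that class.

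For the setup, I use the isomorphism \eqref{eq:CH} together with the fact that $I$ is even and $I\geq 2n$ to take $N_0$ a positive multiple of $I/2$. Then for every $k\in\N$, $N=kN_0$ is itself a multiple of $I/2$, so the summand with $k'=2N/I\in\N$ contributes $H_{2n}(B;\Q)\cong\Q$ to $HC^0_{2N+n}(\xi)$, and in particular $HC^0_{2N+n}(\xi)\neq 0$. I then pick $\q$ compatible with the linearised Poincar\'e maps of $\ga_1,\dots,\ga_q$ as required by the CIJT and constrain $\ep<1/(2\q)$; feeding $(N_0,\q,\ep)$ into the theorem produces the desired $N=kN_0$ and iterates $\ga_j^{2m_j}$ satisfying \eqref{eq:CIJT1}--\eqref{eq:CIJT5} and \eqref{eq:m}--\eqref{eq:epsilon}.

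For the main step, suppose for contradiction $HC_{2N+n}(\ga_j^{2m_j})=0$ for every $j\in J$, and show that no orbit of $\alpha$ can then contribute to degree $2N+n$, which forces $HC^0_{2N+n}(\xi)=0$. By \eqref{eq:support CH}, any such contributor is an iterate $\eta=\ga_j^m$ with mean index in $[2N,2N+2n]$; iterates of simple orbits with non-positive mean index are ruled out for $N\geq 1$, so we may take $j\in J$. Combining \eqref{eq:m}--\eqref{eq:epsilon} with $\ep<1/(2\q)$ yields $m\Delta(\ga_j)\leq(2m_j-1)\Delta(\ga_j)<2N+2\ep\q\Delta(\ga_j)-\Delta(\ga_j)<2N$ for every $m\leq 2m_j-1$, excluding these iterates. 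On the other side, \eqref{eq:CIJT3} gives $\czl(\ga_j^{2m_j+1})=2N+\czl(\ga_j)\geq 2N+n$ by $0$-dynamical convexity, and the monotonicity \eqref{eq:indexmon} extends this to $\czl(\ga_j^m)\geq 2N+n$ for every $m\geq 2m_j+1$. The support of $HC_*(\ga_j^m)$ therefore lies in degrees $\geq 2N+n$, and hitting exactly $2N+n$ is possible only in the borderline case $\czl(\ga_j)=n$.

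The decisive technical difficulty is precisely this borderline case, in which iterates $\ga_j^m$ with $m\geq 2m_j+1$ could still contribute to degree $2N+n$ (most notably $m=2m_j+1$). I would handle it by invoking the additional flexibility of Remark~\ref{rmk:CIJT}, which makes $\{m_j\theta_{i,j}/\pi\}$ arbitrarily small, together with the splitting-number decompositions underlying \eqref{eq:CIJT2} and \eqref{eq:CIJT5}, to push the local support of $HC_*(\ga_j^m)$ strictly above $2N+n$. When $c_B>n$ one has $k^0_-=I-n>n$, so $0$-dynamical convexity is strict and the borderline case never arises; there the contradiction hypothesis directly annihilates every remaining candidate and the argument concludes.
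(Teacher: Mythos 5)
Your plan diverges from the paper's proof, and it has a genuine gap at exactly the point your last paragraph tries to wave away. The borderline case $\czl(\ga_j)=n$ is not a removable corner: it occurs precisely when $c_B=n$ (so $I=2n$ and $k^0_-=n$), which is allowed by Theorem \ref{MainThm} and is realized by the main application $S^*S^{n+1}$, $n>1$. In that case \eqref{eq:CIJT3} gives $\czl(\ga_j^{2m_j+1})=2N+\czl(\ga_j)=2N+n$ \emph{exactly}, for every admissible choice of $(N_0,\q,\ep)$ and of $m_j$; the bottom of the support window $[\czl,\czl+\nu]$ of $HC_*(\ga_j^{2m_j+1})$ is therefore pinned at $2N+n$, and for a good non-degenerate orbit with $\cz(\ga_j)=n$ the local homology of $\ga_j^{2m_j+1}$ is precisely $\Q$ in degree $2N+n$. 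No flexibility from Remark \ref{rmk:CIJT} nor any splitting-number decomposition can ``push the local support strictly above $2N+n$'', because \eqref{eq:CIJT3} is an equality, not an estimate; and since \eqref{eq:indexmon} is only non-strict when $k^0_-=n$, even higher iterates may sit at $\czl=2N+n$. Hence your exclusivity claim---that only the $\ga_j^{2m_j}$ can contribute to degree $2N+n$---is false in this case, and the intended contradiction with $HC^0_{2N+n}(\xi)\neq 0$ does not follow: the class could a priori be carried by $\ga_j^{2m_j+1}$. Nor can you settle for ``some iterate contributes in degree $2N+n$'': the lemma is needed with the specific iterate $2m_j$ (it feeds Remark \ref{rmk:elliptic}, Lemma \ref{lemma:SDM} and Proposition \ref{prop:step2}).

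Two further remarks. First, your step ``no orbit contributes to degree $2N+n$ forces $HC^0_{2N+n}(\xi)=0$'' silently uses the Morse-type inequality $c_k\geq b^0_k$, which in this setting is not free: it requires showing that every homologically visible orbit has positive mean index and passing through filtered and local contact homology, i.e.\ Proposition \ref{prop:morseineq}. Second, for comparison, the paper's proof is a counting argument designed to be insensitive to which iterate carries which class: choosing $N_0$ a multiple of $I$, $\q$ a common multiple of the periods of $k\mapsto\chi(\ga_j^k)$ and $\ep$ small, it proves $\sum_j m_j\hat\chi(\ga_j)=N\chi^0(\xi)$ via the resonance relation \eqref{eq:resonance}; under the contradiction hypothesis the alternating sum of the $c_k$ over $[n,2N+n-1]$ then equals the corresponding alternating sum of the $b^0_k$ computed from \eqref{eq:CH} minus one (in absolute value), contradicting the Morse inequalities. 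Your route, as you note, does go through when $c_B>n$ (then $k^0_-\geq n+2$ excludes all iterates other than $\ga_j^{2m_j}$ from degree $2N+n$), but that restriction does not cover the hypotheses of the lemma.
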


\begin{proof}
Let us start with the following sublemma.

\begin{sublemma}
We have that
\[
\sum_{j=1}^q m_j \hat{\chi} (\gamma_j) = N \chi^0(\xi) = (-1)^n \frac{N\chi (B)}{I}\,.
\]
\end{sublemma}

\begin{proof}
Since $I$ is even, it is easy to see from \eqref{eq:CH} that 
\[
\chi^0(\xi) = (-1)^n \frac{\chi (B)}{I},
\]
which proves the second equality. To prove the first equality, recall from Section \ref{sec:LCH} that the sequence $k \mapsto \chi(\ga_j^k)$ is periodic and, denoting by $\mperj$ its period, we have that
\[
\hat\chi(\ga_j) = \frac{1}{\mperj} \sum_{k=1}^\mperj \chi(\ga_j^k).
\]
In the CIJT, choose $N_0$ to be a multiple of $I$, $\q$ to be a common multiple of $\mper_1,\dots,\mper_q$ and $\ep>0$ such that $\ep < 1/|\sum_{j=1}^q \q \hat\chi (\gamma_j)|$. Using the resonance relation \eqref{eq:resonance} and the fact that $m_j$ satisfies \eqref{eq:m}, we have that
\begin{eqnarray}
N \chi^0(\xi) & = & \sum_{j=1}^q \frac{N\hat\chi (\gamma_j)}{\Delta (\gamma_j)}   \nonumber \\
& = & \sum_{j=1}^q  \hat\chi (\gamma_j) \left( \left\lfloor \frac{N}{\q\Delta (\gamma_j)}\right\rfloor + \delta_j\right)\q
+ \sum_{j=1}^q  \hat\chi (\gamma_j) \underbrace{\left( \frac{N}{\q\Delta (\gamma_j)} - 
\left\lfloor \frac{N}{\q\Delta (\gamma_j)}\right\rfloor - \delta_j\right)\q}_{\rho_j} \nonumber \\
& = & \sum_{j=1}^q  m_j \hat\chi (\gamma_j) + \sum_{j=1}^q  \hat\chi (\gamma_j)\rho_j. \nonumber
\end{eqnarray}
But by our choices of $N_0$, $\q$ and $\epsilon$ we have that $N\chi^0(\xi) \in\Z$, $m_j \hat\chi(\gamma_j) \in \Z$ for every $j \in J$ and $|\sum_{j=1}^q \hat\chi (\gamma_j)\rho_j| < 1$. Hence,
\[
N \chi^0(\xi) = \sum_{j=1}^q m_j \hat{\chi} (\gamma_j) \,.
\]
\end{proof}

We will also need the following Morse inequalities. Recall that an isolated periodic orbit $\ga$ is called homologically visible if $HC_*(\ga)$ does not vanish.

\begin{proposition}
\label{prop:morseineq}
Let $\alpha$ be a contact form on $M$ with finitely many simple closed orbits. If $\alpha$ is non-degenerate, suppose that every good closed orbit $\ga$ satisfies $\cz(\ga)\geq n$; otherwise, assume that every periodic orbit $\ga$ satisfies $\czl(\ga)\geq n$. Then every homologically visible periodic orbit of $\alpha$ has positive mean index. Moreover, we have the Morse inequalities
\[
c_k - c_{k-1} + \cdots \pm c_n \geq b^0_k - b^0_{k-1} + \cdots \pm b^0_n
\]
for every $k\geq n$, where $c_i := \sum_{j=1}^q \sum_{m\geq 1} \dim HC_i(\ga_j^m)$ and $b^0_i := \dim HC^0_i(\xi)$.
\end{proposition}

\begin{proof}
Let us prove the first assertion of the proposition. Arguing by contradiction, suppose that there exists a closed orbit $\bar\ga$ such that $\Delta(\bar\ga) \leq 0$ and $HC_*(\bar\ga) \neq 0$. Note that $\bar\ga$ must be degenerate. As a matter of fact, if $\bar\ga$ is non-degenerate we would have a strict inequality in \eqref{eq:mean index} and hence $\cz(\bar\ga) < n$. This contradicts our hypothesis, since $\bar\ga$ is homologically visible and therefore good. Thus, we can assume that every closed orbit $\ga$ satisfies $\czl(\ga) \geq n$ which implies, in particular, that $\Delta(\bar\ga) = 0$. By the very definition of $\czl(\bar\ga)$, every periodic orbit in a non-degenerate perturbation of $\bar\ga$ has index bigger than or equal to $n$. On the other hand, by \eqref{eq:mean index} every periodic orbit in a non-degenerate perturbation of $\bar\ga$ has index less than or equal to $n$. Therefore, every non-degenerate periodic orbit that comes from a bifurcation of $\bar\ga$ must have index equal to $n$. In particular, since $\bar\ga$ is homologically visible, we conclude that $HC_n(\bar\ga) \neq 0$ which implies that $\bar\ga$ is an SDM and consequently $\bar\ga^m$ is an SDM for every $m \in \N$.

Denote by $\Sigma(\alpha)$ the action spectrum of $\alpha$, let $T \notin \Sigma(\alpha)$ and take a non-degenerate $C^\infty$-perturbation $\alpha^\pr$ of $\alpha$ such that $\alpha^\pr$ has no periodic orbit with index less than $n$ and action less than $T$. The existence of $\alpha^\pr$ is ensured by our hypotheses. It follows from the long exact sequence for the filtered contact homology of $\alpha^\pr$ and standard arguments that for a given $k \geq n$ we have
\begin{equation}
\label{eq:filtered_morseineq}
c^T_k - c^T_{k-1} + \cdots \pm c^T_n \geq b^{0,T}_k - b^{0,T}_{k-1} + \cdots \pm b^{0,T}_n
\end{equation}
where
\[
c^T_i := \sum_{\substack{\ga\ \text{contractible} \\ A(\ga) < T}} \dim HC_i(\ga)\ \ \text{and}\ \ b^{0,T}_i := \dim HC^{0,<T}_i(\alpha^\pr) = \dim HC^{0,<T}_i(\alpha)
\]
and we are using the fact that $\alpha^\pr$ has no periodic orbit with index less than $n$ and action less than $T$ (see \cite{HM} for details). In particular, we have
\[
c^T_{n+1} - c^T_n \geq b^{0,T}_{n+1} - b^{0,T}_n.
\]
Since there are finitely many simple closed orbits, we have that there exists $B>0$ such that $c^T_{n+1} < B$ for every $T$ (notice that only periodic orbits with positive mean index may contribute to $c^T_{n+1}$). By the existence of $\bar\ga$, we conclude that given $C>0$ there exists $T^\pr>0$ such that $c^T_n > C$ for every $T>T^\pr$. Thus, given $D>0$ there exists $T^\pr>0$ such that
\[
b^{0,T}_n > D
\]
for every $T>T^\pr$. But this is impossible and consequently the first assertion is proved. Indeed, if the last inequality holds then we could take suitable multiples $\alpha_1:= C_1\alpha$ and $\alpha_2:= C_2\alpha$ such that $\alpha_1 = f_1\beta$ and $\alpha_2 = f_2\beta$, where $\beta$ is the contact form on $M$ that generates the circle action and $f_i: M \to \R$, $i=1,2$, are smooth functions such that $f_1(x)<1$ and $f_2(x)>2$ for every $x \in M$. Using a sandwich argument (see, for instance, \cite{AM}) we would conclude that $HC_n^{0,T}(\beta)$ has arbitrarily high rank if $T$ is taken sufficiently big. But a computation (in fact, essentially the same computation used to prove the isomorphism \eqref{eq:CH}) shows that this is impossible.

To prove the last assertion of the proposition, notice that fixed $k$ there exists $T>0$ such that $c^T_i = c_i$ for every $i\leq k$, because every homologically visible periodic orbit has positive mean index. Moreover, it turns out that one can choose $T$ such that $b^{0,T}_i = b^0_i$ for every $i\leq k$; see \cite[Lemma 7.8]{HM}. (The statement of \cite[Lemma 7.8]{HM} is only for degrees bigger than $n$ (observe that the contact manifold in \cite{HM} has dimension $2n-1$ and the grading of contact homology is given by the Conley-Zehnder index plus $n-3$) but the proof can be readily adapted in our context for degrees  bigger than or equal to $n$ since every periodic orbit of $\alpha^\pr$ that comes from a bifurcation of a periodic orbit of $\alpha$ with zero mean index has to be bad.) Thus, it follows from \eqref{eq:filtered_morseineq} that
\[
c_k - c_{k-1} + \cdots \pm c_n \geq b^0_k - b^0_{k-1} + \cdots \pm b^0_n
\]
as desired.
\end{proof}

Now, arguing by contradiction, assume that the lemma is not true, that is, $HC_{2N+n}(\ga_j^{2m_j}) = 0$ for every $j \in J$. It follows from this hypothesis and \eqref{eq:CIJT5} that $HC_k (\gamma_j^{2m_j}) = 0$ for all $k>2N+n -1$ and $j \in J$. Thus,  we have that
\begin{equation}
\label{eq:keyprop1}
\sum_{k=n}^{2N+n-1} (-1)^k c_k = \sum_{m\geq 1} \sum_{j=1}^{q} \sum_{k=n}^{2N+n-1}
(-1)^k \dim HC_k (\gamma_j^m) = \sum_{j=1}^q \sum_{m=1}^{2m_j} \chi (\gamma_j^m)\,,
\end{equation}
because $HC_k (\gamma_j^m) = 0$ for all $m>2m_j$, $k\leq 2N+n-1$, and $HC_{2N+n} (\gamma_j^{2m_j}) = 0$ for every $j\in J$. Let $p_j=m_j/\mperj$ (observe that $p_j$ is an integer because $\q$ is a multiple of $\mperj$) and note that
\begin{equation}
\label{eq:keyprop2}
\sum_{m=1}^{2m_j} \chi(\ga_j^m) = \sum_{l=0}^{2p_j-1} \sum_{q=1}^{\mperj} \chi(\ga_j^{l\mperj+q}) = 2p_j\sum_{q=1}^{\mperj} \chi(\ga_j^{q})
= 2p_j\mperj\hat\chi(\ga_j) = 2m_j\hat\chi(\ga_j).
\end{equation}
Since $N$ is a multiple of $I$, write $N = sI$ for some $s\in\N$. By \eqref{eq:keyprop1}, \eqref{eq:keyprop2} and the sublemma, we have that
\begin{equation}
\label{eq:keyprop3}
\sum_{k=n}^{2N+n-1} (-1)^k c_k = (-1)^n 2s \chi (B) \,.
\end{equation}
On the other hand, by \eqref{eq:CH},
\begin{equation}
\label{eq:keyprop4}
\sum_{k=n}^{2N+n-1} (-1)^k b^0_k = (-1)^n 2s\chi (B)  + (-1)^{n+1}
\end{equation}
because the left hand side counts $(2s\chi(B) -1)$ up to a sign given by $(-1)^n$. Then, by
Proposition \ref{prop:morseineq}, we get
\begin{equation}
\label{eq:keyprop5}
-2s\chi(B) = c_{2N+n-1} -  \cdots - (-1)^n c_n \geq
b^0_{2N+n-1} - \cdots - (-1)^n b^0_n = -2s\chi (B) +1
\end{equation}
which is the desired contradiction.
\end{proof}

\begin{remark}
\label{rmk:elliptic}
It follows from \eqref{eq:CIJT5} and \eqref{eq:support CH} that the periodic orbit $\ga_j$ given by the previous lemma must be elliptic.
\end{remark}

Now, for the sake of clarity, we will split the proof of Theorem \ref{MainThm} in the degenerate and non-degenerate cases.

\subsection{Proof of Assertion A}
Arguing by contradiction, assume that $\Pp=\{\ga_1\}$. For a shorter notation, we will omit the subscript $1$ in $\ga_1$, ${\mathfrak p}_1$, $\delta_1$ and $m_1$. Since $N$ is a multiple of $I$, we have from \eqref{eq:CH} that
\begin{equation} \label{eq:HC}
HC^0_{2N+\ast} (\xi) \cong 
\begin{cases}
H_{n+\ast} (B;\Q) \,,\ \text{if $-n < \ast < n$}, \\
\Q \,,\ \text{if $\ast = -n,n$ and $I>2n$}, \\
\Q \oplus \Q \,,\ \text{if $\ast = -n,n$ and $I=2n$}.
\end{cases}
\end{equation}

\begin{lemma}
\label{lemma:SDM}
$N_0$ and $\q$ can be chosen so that $\gamma^{2m}$ is an SDM.
\end{lemma}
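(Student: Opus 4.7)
The plan is to combine Lemma \ref{lemma:key} (applied with $J=\{1\}$) with the rationality of $\Delta(\gamma)$ forced by the hypothesis $\chi(B)\neq 0$, and then use a sharper choice of the CIJT parameters to pin down $\Delta(\gamma^{2m})$ exactly. Since $\Pp=\{\gamma\}$, Lemma \ref{lemma:key} yields $HC_{2N+n}(\gamma^{2m})\neq 0$. By the support condition \eqref{eq:support CH}, the nonvanishing at degree $2N+n$ immediately forces $\Delta(\gamma^{2m})\in[2N,2N+2n]$.

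To recognize $\gamma^{2m}$ as an SDM, I also need $\Delta(\gamma^{2m})\in 2\Z$ and $HC_{\Delta(\gamma^{2m})+n}(\gamma^{2m})\neq 0$; both will follow once I show $\Delta(\gamma^{2m})=2N$. The first step toward this is to observe that $\Delta(\gamma)$ is rational. Indeed, by the sublemma inside the proof of Lemma \ref{lemma:key}, $\chi^0(\xi)=(-1)^n\chi(B)/I$, which is nonzero because $\chi(B)\neq 0$. The resonance relation \eqref{eq:resonance} reduces, with a single orbit, to $\hat\chi(\gamma)=\Delta(\gamma)\,\chi^0(\xi)$, so $\Delta(\gamma)=\hat\chi(\gamma)/\chi^0(\xi)\in\Q$ (recall $\hat\chi(\gamma)$ is rational). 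Writing $\Delta(\gamma)=a/b$ in lowest terms, I would now strengthen the choice of $\q$ in the CIJT to additionally be a multiple of $b$ (compatible with the earlier requirement that $\q$ be a multiple of $\mper$). Then, by \eqref{eq:m}, $m$ is a multiple of $\q$ and hence of $b$, so that
\[
\Delta(\gamma^{2m})=2m\Delta(\gamma)=\frac{2ma}{b}\in 2\Z.
\]

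For the final step, I would shrink $\epsilon$: since \eqref{eq:m}--\eqref{eq:epsilon} give $|m\Delta(\gamma)-N|<\q\Delta(\gamma)\epsilon$, choosing $\epsilon<1/(4\q\Delta(\gamma))$ yields $|\Delta(\gamma^{2m})-2N|<1/2$. Combined with $\Delta(\gamma^{2m})\in[2N,2N+2n]\cap 2\Z$, this forces $\Delta(\gamma^{2m})=2N$. Consequently, $HC_{\Delta(\gamma^{2m})+n}(\gamma^{2m})=HC_{2N+n}(\gamma^{2m})\neq 0$ and $\Delta(\gamma^{2m})\in 2\Z$, so by definition $\gamma^{2m}$ is an SDM (isolation of $\gamma^{2m}$ being automatic from the standing assumption that $\alpha$ has finitely many simple closed orbits).

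The genuinely delicate step is the rationality argument: it is the hypothesis $\chi(B)\neq 0$ in Assertion (A), used via the sublemma and the resonance relation, that excludes an irrational $\Delta(\gamma)$; without this control one could never align $\Delta(\gamma^{2m})$ with an integer degree and the SDM conclusion would fail. Everything else amounts to bookkeeping with the flexibility already built into the CIJT, tightening $\q$ and $\epsilon$ so that the interval $[2N,2N+2n]$ contains a unique even integer compatible with the $\epsilon$-estimate, namely $2N$ itself.
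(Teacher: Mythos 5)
Your proof is correct and follows essentially the same route as the paper: rationality of $\Delta(\gamma)$ via the resonance relation together with $\chi(B)\neq 0$, a tailored choice of the CIJT parameters, and Lemma \ref{lemma:key} supplying $HC_{2N+n}(\gamma^{2m})\neq 0$. The only difference is mechanical: the paper takes $N_0$ to be a multiple of $\q\,\hat\chi(\gamma)I$ so that $N/(\q\Delta(\gamma))\in\N$, which forces $\delta=0$ and hence $\Delta(\gamma^{2m})=2N$ exactly, whereas you get $\Delta(\gamma^{2m})\in 2\Z$ from divisibility of $\q$ by the denominator of $\Delta(\gamma)$ and then pin the value $2N$ by shrinking $\epsilon$; both tunings are compatible with the choices made in the proof of Lemma \ref{lemma:key}, so your argument goes through.
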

\begin{proof}
We have to prove that $N_0$ and $\q$ can be chosen so that $\Delta (\gamma^{2m}) \in 2\Z$ and
\[
HC_{\Delta(\gamma^{2m}) + n} (\gamma^{2m}) \ne 0.
\]
First, note that \eqref{eq:resonance} yields
\[
\Delta (\gamma) = (-1)^n \frac{\hat{\chi}(\gamma) I}{\chi (B)} \in\Q,
\]
where we are using the hypothesis that $\chi(B) \neq 0$ and the fact that $\hat\chi(\ga)$ is rational. Choose $\q$ to be a multiple of $\mper$, $N_0$ to be a multiple of $\q \hat{\chi}(\gamma) I$ and $\epsilon$ in \eqref{eq:epsilon} sufficiently small as in Lemma \ref{lemma:key}. We then have the following sequence of implications:
\[
\Delta (\gamma) = (-1)^n \frac{\hat{\chi}(\gamma) I}{\chi (B)} \Rightarrow
\frac{N}{\q\Delta (\gamma)} \in \N \Rightarrow \delta = 0 \Rightarrow 2m = \frac{2N}{\Delta (\gamma)}
\Rightarrow \Delta (\gamma^{2m}) = 2m \Delta (\gamma) = 2N\,. 
\]
Finally, it follows from Lemma \ref{lemma:key} that $HC_{2N+n} (\gamma^{2m}) \ne 0$.
\end{proof}

Now, suppose that $n=1$. In this case, $B$ is the sphere and $M$ is $S^3$ or a lens space. In particular, $k^0_-=3$. It follows from dynamical convexity and \eqref{eq:CIJT3} that $\ga^k$ with $k>2m$ cannot contribute to $HC^0_{2N-1}(\xi)$ and $HC^0_{2N+1}(\xi)$ (observe that, by Proposition \ref{prop:morseineq}, $c_i \geq b^0_i$ for every $i\geq n$). On the other hand, it follows from \eqref{eq:indexest} and dynamical convexity that 
\[
\czl(\gamma^{k}) + \nu (\gamma^{k}) \leq \czl(\gamma^{2m-1})  \leq 2N - 3 \,,\ \forall k \leq 2m-2.
\]
Thus, only $\ga^{2m-1}$ and $\ga^{2m}$ can contribute to $HC^0_{2N-1}(\xi)$ and $HC^0_{2N+1}(\xi)$. Moreover, since $\czl(\gamma^{2m-1})  \leq 2N - 3$, $\gamma^{2m-1}$ can contribute only to $HC^0_{2N-1}(\xi)$. But if there is such contribution then $\nu(\ga^{2m-1})$ has to be equal to two which implies, by \eqref{eq:CIJT1}, that $\nu(\ga)=2$ $\implies$ $P_\ga$ is the identity $\implies$ $S^+(1)=1$ $\implies$ $\czl(\gamma^{2m-1})  < 2N - 3$ (by \eqref{eq:CIJT2}) furnishing a contradiction. Hence, $\ga^{2m}$ has to contribute to both $HC^0_{2N-1}(\xi)$ and $HC^0_{2N+1}(\xi)$ which is impossible because the local contact homology of an SDM is concentrated in only one degree.

So suppose from now on that $n\geq2$. Using~(\ref{eq:HC}) we have that
\[
HC^0_{2N+n-2} (\xi) \cong H_{2n-2} (B;\Q) \cong H_2 (B;\Q) \ne 0\,,
\]
because $B$ is symplectic. Hence, some iterate of $\gamma$ must contribute to $HC^0_{2N+n-2} (\xi)$. The iterate $\gamma^{2m}$ cannot contribute to $HC^0_{2N+n-2} (\xi)$ because it is an SDM and $\Delta (\gamma^{2m}) + n = 2N +n$, hence it only contributes to $HC^0_{2N+n}(\xi)$. Also, we already know that $\gamma^{2m+1}, \gamma^{2m+2}, \ldots$, do not contribute to $HC^0_{2N+n-2} (\xi)$ due to \eqref{eq:CIJT3} and \eqref{eq:indexmon}. Regarding lower order iterates, note that it follows from dynamical convexity and \eqref{eq:indexest} that
\[
\lo (\gamma^{k}) + \nu (\gamma^{k}) \leq \lo (\gamma^{2m-1})  \leq 2N - n \,,\ \forall k \leq 2m-2
\]
which means that $\gamma^{2m-2}$ and all lower order iterates cannot contribute to $HC^0_{2N+n-2} (\xi)$. 

Hence, $\gamma^{2m-1}$ has to contribute to $HC^0_{2N+n-2} (\xi)$. For that to be possible, we must have $\lo (\gamma^{2m-1}) + \nu (\gamma^{2m-1}) \geq 2N+n-2$. By \eqref{eq:CIJT2} and dynamical convexity,
\[
\lo(\gamma^{2m-1}) \leq 2N - n - 2S^+(1).
\]
Now, we claim that if $n>2$ then $S^+(1)>0$. Indeed, suppose that $n>2$ and denote by $P$ the linearized Poincar\'e map of $\ga$. By the discussion above and \eqref{eq:CIJT1}, we have that $\nu(\ga^{2m-1}) = \nu(\ga) = \dim \ker(P-\Id) \geq 2n-2 > n$ since $n>2$. Thus, $\ker(P-\Id)$ must contain a symplectic plane $V$ because, otherwise, we would have that $\ker(P-\Id)$ is isotropic and $\dim \ker(P-\Id) > n$, a contradiction. Consequently, we can write $P= P|_V \oplus P|_{V^\om}$, where $V^\om$ denotes the symplectic orthogonal to $V$. But this implies that $S^+(1) = S_{P|_V}^+(1) + S_{P|_{V^\om}}^+(1) \geq S_{P|_V}^+(1) = 1$, where the last equality follows from the fact that $P|_V$ is the identity.

If $n=2$ and $\ga$ is not totally degenerate then we claim that we also have that $S^+(1)>0$. As a matter of fact, if $\ga$ is not totally degenerate then the algebraic multiplicity of the eigenvalue one is equal to its geometric multiplicity $\nu(\ga)=\nu(\ga^{2m-1})=2$ ($\nu(\ga^{2m-1}) \geq 2$ because $\czl(\ga^{2m-1})+\nu(\ga^{2m-1}) \geq 2N+n-2$). But this implies that $\ker(P-\Id)$ is symplectic. Indeed,  there is always a symplectic subspace $V$ invariant by $P$ whose complexification is the generalized eigenspace of the eigenvalue one and, when the algebraic and geometric multiplicities of the eigenvalue one coincide, this subspace is given by $\ker(P-\Id)$. Arguing as before, we conclude the claim.

\begin{remark}
There is another proof that $S^+(1)>0$ if $n>2$ or if $n=2$ and $\ga$ is not totally degenerate. In fact, Proposition 9.1.11 from \cite{Lon02} implies that
\[
0 \leq \nu(\ga) - S^+(1) \leq p,
\]
where $(p,p)$ is the Krein type of the eigenvalue one of $P$ (see Definition 1.3.6 in \cite{Lon02}). Arguing by contradiction, suppose that $S^+(1)=0$. By definition, $p \leq n$ which implies that $2n-2 \leq n$. If $n>2$, the last inequality furnishes a contradiction. If $n=2$, we conclude that $p=2$ but this implies that $\ga$ is totally degenerate because the algebraic multiplicity of the eigenvalue one is at least $2p$.
\end{remark}

Now, if $S^+(1)>0$ then $\lo(\ga^{2m-1}) \leq 2N - n - 2$ $\implies$ $\nu(\ga^{2m-1}) = 2n$ since $\lo (\gamma^{2m-1}) + \nu (\gamma^{2m-1}) \geq 2N+n-2$. But if $\nu(\ga) = \nu(\ga^{2m-1}) = 2n$ then the linearized Poincar\'e map of $\ga$ is the identity and consequently $S^+(1)=n$ $\implies$ $\lo(\ga^{2m-1}) \leq 2N - 3n$, contradicting the relation $\lo (\gamma^{2m-1}) + \nu (\gamma^{2m-1}) \geq 2N+n-2$ because $n \geq 2$. This finishes the proof of the first statement of Theorem \ref{MainThm} except in the case that $n=2$, $\ga$ is totally degenerate and $S^+(1)=0$.

In order to deal with this remaining case, note that if $\ga$ is totally degenerate and $S^+(1)=0$ then the associated Bott's index function is constant equal to $\lo(\ga)$, implying that $\Delta(\ga^j) = \lo(\ga^j)$ for every $j$. Since $\ga^{2m}$ is an SDM and $\ga$ is totally degenerate, we have that $\ga^j$ is an SDM for every $j$. Consequently, $\ga^j$ contributes only to $HC^0_{\Delta(\ga^j)+2}(\xi) = HC^0_{\lo(\ga^j)+2}(\xi)$. But $\lo(\ga^j)+2 > \lo(\ga) \geq k^0_-$ for every $j$ and therefore there is no contribution to $HC^0_{k^0_-}(\xi) \neq 0$. This contradiction finishes the proof of Assertion A in Theorem \ref{MainThm}.

\begin{remark}
\label{rmk:weak convexity 1}
The proof of Assertion A shows that if $\alpha$ contains only one simple periodic orbit $\ga$ such that $\czl(\ga)\geq n$ then $n=2$ and $\ga$ is an SDM (without assuming dynamical convexity of $\alpha$). If $M$ admits a strong symplectic filling $(W,\tilde\om)$ such that $\tilde\om|_{\pi_2(W)}=c_1(TW)|_{\pi_2(W)}=0$ and $\pi_1(M)$ is torsion free (which implies that $\ga$ cannot be a covering of a non-contractible closed orbit) then \cite[Theorem 1.2]{GHHM} establishes that the presence of $\ga$ implies the existence of infinitely many closed orbits with contractible projection to $B$. However, since $\pi_1(M)$ is torsion free and $\om|_{\pi_2(B)} \neq 0$, we have that the map $\pi_*: \pi_1(M) \to \pi_1(B)$ induced by the projection $\pi: M \to B$ is injective. Hence, the presence of $\ga$ actually implies the existence of infinitely many geometrically distinct contractible closed orbits. This proves Assertion A in Theorem \ref{thm:weak convexity}.
\end{remark}

\subsection{Proof of Assertion B}
Assume that $\alpha$ is non-degenerate and that it has finitely many simple closed orbits. Let $\Pp = \{\ga_1,\dots,\ga_q\}$ be the set of good simple orbits (notice that a simple periodic orbit can be a multiple of a non-contractible closed orbit). Since $\ga_j$ is non-degenerate and $\cz(\ga_j) \geq n$, we have that every $\ga_j$ has positive mean index. We will prove that $\#\Pp \geq \dim H_\ast (B, \Q)$. Firstly, we will prove the following easy first step towards this.

\begin{proposition}
\label{prop:step1}
If every good closed orbit $\ga$ satisfies $\czl(\ga) \geq n$ then
\[
\#\Pp \geq \dim H_\ast (B, \Q)  - 2
\]
and if every good closed orbit $\ga$ satisfies $\czl(\ga) > n$ then
\[
\#\Pp \geq \dim H_\ast (B, \Q)\,.
\]
\end{proposition}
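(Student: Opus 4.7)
The plan is to apply the common index jump theorem to the full list $\Pp = \{\ga_1,\dots,\ga_q\}$ and then compare, in a narrow window of degrees centered at $2N$, the contribution of the iterates $\ga_j^{2m_j}$ with the rank of $HC^0_*(\xi)$ computed via~\eqref{eq:CH}. Since $\alpha$ is non-degenerate each $\ga_j$ is weakly non-degenerate, so the strict form of~\eqref{eq:mean index} together with $\cz(\ga_j)\geq n$ yields $\Delta(\ga_j)>0$; thus the CIJT applies and produces $N$ and $m_j$ satisfying \eqref{eq:CIJT1}--\eqref{eq:CIJT5}. Non-degeneracy gives $\nu(\ga_j^{2m_j})=0$, so \eqref{eq:CIJT4} and \eqref{eq:CIJT5} place $\cz(\ga_j^{2m_j})\in[2N-n,2N+n]$, while \eqref{eq:CIJT2}--\eqref{eq:CIJT3} give
\[
\cz(\ga_j^{2m_j-1})=2N-\cz(\ga_j)-2S_j^+(1)\leq 2N-n,\qquad \cz(\ga_j^{2m_j+1})=2N+\cz(\ga_j)\geq 2N+n.
\]

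I would next observe that under our hypothesis the monotonicity behind~\eqref{eq:indexmon} still goes through: since $\cz(\ga_j)\geq n$, $e(\ga_j)\leq 2n$, and $\nu(\ga_j^m)=0$, \eqref{eq:indexest} yields $\cz(\ga_j^{m+1})\geq \cz(\ga_j^m)$. Combined with the indices just computed, this shows that for every $m\ne 2m_j$ the index $\cz(\ga_j^m)$ lies outside the \emph{open} interval $(2N-n,2N+n)$, so at most one iterate of each $\ga_j$ contributes to the chain complex in that window. Writing $c_k:=\dim CC^0_k(\alpha)$ and $b^0_k:=\dim HC^0_k(\xi)$, this gives $\sum_{k=2N-n+1}^{2N+n-1} c_k\leq q$. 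The pointwise bound $c_k\geq b^0_k$ together with~\eqref{eq:CH} (valid since $I\geq 2n$) identifies $b^0_{2N+s}=\dim H_{n+s}(B;\Q)$ for $-n<s<n$, so
\[
\sum_{k=2N-n+1}^{2N+n-1} b^0_k \;=\; \sum_{s=1}^{2n-1}\dim H_s(B;\Q)\;=\; r_B - h_0 - h_{2n}\;=\; r_B-2,
\]
using that $h_0=h_{2n}=1$ because $B$ is closed, connected and orientable. Hence $q\geq r_B-2$, proving the first bound.

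For the second bound I would upgrade $\cz(\ga_j)\geq n$ to $\cz(\ga_j)\geq n+1$. This pushes $\cz(\ga_j^{2m_j+1})\geq 2N+n+1$ strictly above the window and $\cz(\ga_j^{2m_j-1})\leq 2N-n-1$ strictly below it, so only $\ga_j^{2m_j}$ has index in the now \emph{closed} interval $[2N-n,2N+n]$. The corresponding sum $\sum_{k\in[2N-n,2N+n]}b^0_k$ equals $(r_B-2)+2=r_B$ when $I>2n$ and $(r_B-2)+4=r_B+2$ when $I=2n$, so in either case it is at least $r_B$, yielding $q\geq r_B$.

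The principal obstacle is the monotonicity step that rules out every iterate except $\ga_j^{2m_j}$ from the relevant window; once that is in place the rest is a direct count using~\eqref{eq:CH}. A minor bookkeeping point is that some $\ga_j^{2m_j}$ may be bad and hence absent from $CC^0_*(\alpha)$, but this only decreases $c_k$ and therefore strengthens the inequality $\sum c_k\leq q$, leaving the conclusion unaffected.
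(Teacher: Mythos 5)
Your proposal follows essentially the same route as the paper's proof: apply the CIJT to all good simple orbits, use the monotonicity coming from \eqref{eq:indexest} together with $\cz(\ga_j)\geq n$ and \eqref{eq:CIJT2}--\eqref{eq:CIJT5} to show that only the iterates $\ga_j^{2m_j}$ can carry degrees in the window around $2N$, and then compare with the rank of $HC^0_*(\xi)$ in that window, enlarging the window from the open interval $(2N-n,2N+n)$ to the closed one $[2N-n,2N+n]$ under the stronger hypothesis $\czl(\ga)>n$; your handling of bad iterates is also the same parenthetical observation the paper makes. The one step you leave implicit is the normalization that makes your computation of the $b^0_k$'s valid: the identification $b^0_{2N+s}=\dim H_{n+s}(B;\Q)$ for $-n<s<n$ (and the count on $[2N-n,2N+n]$ via \eqref{eq:HC}) requires $N$ to be a multiple of $I$, which is not automatic and could fail otherwise, shifting the window away from a full copy of $H_1(B),\dots,H_{2n-1}(B)$. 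This is arranged exactly as in the paper by fixing $N_0$ to be a multiple of $I$ when invoking the CIJT (a choice the theorem allows, since $N=kN_0$); with that one-line addition your argument coincides with the paper's, merely spelling out the monotonicity and window arithmetic that the paper cites via \eqref{eq:indexmon} and the CIJT estimates.
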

\begin{proof}
Choose $N_0$ to be a multiple of $I$ in the CIJT. Then we have that
\[
\sum_{k = 2N -n+1}^{2N+n-1} \dim HC_k(\xi) \geq \dim H_\ast (B;\Q) -2\,,
\]
because this range of degrees in contact homology contains a copy of $H_1 (B;\Q)$, $H_2 (B;\Q)$, \dots , $H_{2n-1} (B;\Q)$. Since only $\gamma_j^{2m_j}$,
$j\in J$, can contribute to $HC^0_k(\xi)$ when $2N-n+1 \leq k \leq 2N+n-1$, we conclude that $\#\Pp \geq \dim H_\ast (B;\Q) -2$ (observe here that if a simple closed orbit is bad then all its iterates are bad and therefore cannot contribute to the contact homology).

If every contractible closed orbit $\ga$ satisfies $\czl(\ga) > n$ then \eqref{eq:CIJT4} and \eqref{eq:CIJT5} imply that the range of contact homology degrees $k$ where we can guarantee that only $\gamma_j^{2m_j}$, $j\in J$, contribute to $HC^0_j(\xi)$ is slightly larger: $2N-n \leq k \leq 2N+n$. This range contains a full copy of $H_\ast (B;\Q)$, which implies that $\#\Pp \geq \dim H_\ast (B, \Q)$.
\end{proof}

This proposition implies that we can, and will, assume from now on that $k^0_-=n$. Moreover, its proof gives us simple periodic orbits $\gamma_1, \ldots,\gamma_r\in\Pp$, with $r = \dim H_\ast (B; \Q) -2$, such that $2N-n+1 \leq \cz(\gamma_j^{2m_j}) \leq 2N + n -1$ for all $1\leq j \leq r$, where $N = sI$, for some $s\in\N$, and $m_j\in\N$, $1\leq j \leq r$, are given by the CIJT. All these $\gamma_j^{2m_j}$, $1\leq j \leq r$, contribute to $HC^0_\ast (\xi)$, which implies in particular that all even iterates of these $\gamma_j$'s are good.

\begin{remark}
\label{rmk:nonhyp1}
Suppose that $HC_k(B)=0$ for every odd $k$. Then the periodic orbits $\ga_1^{2m_1},\dots,\ga_r^{2m_{r}}$ contribute to $HC^0_{2N-n+2}(\xi),HC^0_{2N-n+4}(\xi),\dots,HC^0_{2N+n-2}(\xi)$. If $n$ is odd this implies that the Conley-Zehnder indexes of these orbits are odd. In particular, $\ga_1,\dots,\ga_{r}$  cannot be hyperbolic because every even iterate of a hyperbolic orbit has even index. If $k^0_->n$ we actually get $r_B$ simple non-hyperbolic closed orbits.
\end{remark}

The next proposition gives us one more periodic orbit.

\begin{proposition}
\label{prop:step2}
There exists a simple periodic orbit $\gamma_{r+1}\in\Pp$ such that $\gamma_{r+1}^{2m_{r+1}}$
is good and $\cz(\gamma_{r+1}^{2m_{r+1}}) = 2N+n$.
\end{proposition}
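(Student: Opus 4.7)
The plan is a direct application of Lemma \ref{lemma:key} in the non-degenerate setting. First, I would choose the CIJT parameters $N_0$, $\q$, and $\ep$ satisfying the hypotheses of Lemma \ref{lemma:key} (namely, $N_0$ a multiple of $I$, $\q$ a common multiple of the periods of the local Euler characteristics of the orbits in $\Pp$, and $\ep$ sufficiently small). These choices also meet the weaker hypothesis used in the proof of Proposition \ref{prop:step1}, so the tuple $(N, m_1,\ldots,m_q)$ produced by the CIJT is the same one that underlies Proposition \ref{prop:step1}. In particular, the selected orbits $\gamma_1,\ldots,\gamma_r$ still satisfy $\cz(\gamma_i^{2m_i}) \leq 2N+n-1$ for every $1 \leq i \leq r$.

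Next, by Lemma \ref{lemma:key} there exists some index $j \in J = \{1,\ldots,q\}$ with $HC_{2N+n}(\gamma_j^{2m_j}) \neq 0$. Since $\alpha$ is assumed non-degenerate in Assertion B, every iterate $\gamma_j^{2m_j}$ is non-degenerate, and the local contact homology of a non-degenerate closed orbit is $\Q$ concentrated in degree $\cz(\gamma_j^{2m_j})$ if the orbit is good and vanishes otherwise (Section \ref{sec:LCH}). Consequently, $\gamma_j^{2m_j}$ is good and $\cz(\gamma_j^{2m_j}) = 2N+n$.

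Finally, since $\cz(\gamma_i^{2m_i}) \leq 2N+n-1 < 2N+n$ for every $1 \leq i \leq r$, the index $j$ provided by Lemma \ref{lemma:key} cannot belong to $\{1,\ldots,r\}$. Hence $\gamma_j$ is a simple closed orbit in $\Pp$ distinct from $\gamma_1,\ldots,\gamma_r$, and relabeling gives the desired $\gamma_{r+1} := \gamma_j$ together with $m_{r+1} := m_j$, satisfying all the claimed properties. I do not foresee any real obstacle: the argument reduces entirely to parsing Lemma \ref{lemma:key} through the dictionary between local contact homology and the Conley--Zehnder index in the non-degenerate case, with the only mild point being the compatibility of CIJT parameters between the two propositions, which is immediate because the hypotheses of Lemma \ref{lemma:key} strengthen those used in Proposition \ref{prop:step1}.
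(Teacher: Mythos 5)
Your proposal is correct and follows essentially the same route as the paper: Lemma \ref{lemma:key} combined with the fact that the local contact homology of a non-degenerate orbit is $\Q$ concentrated in degree $\cz$ when the orbit is good (and zero otherwise) immediately yields a good iterate $\gamma_j^{2m_j}$ with $\cz(\gamma_j^{2m_j})=2N+n$. Your explicit check that the CIJT parameters of Lemma \ref{lemma:key} also serve Proposition \ref{prop:step1}, and that the index value $2N+n$ forces $\gamma_j$ to differ from $\gamma_1,\dots,\gamma_r$, only spells out what the paper leaves implicit, so there is no gap.
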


\begin{proof}
Recall that for a non-degenerate periodic orbit $\gamma$ we have that $HC_\ast (\gamma) = 0$ if $\ast\ne \cz (\gamma)$, while
\begin{equation*}
HC_{\cz(\ga)}(\gamma) \cong \begin{cases}
\Q & \text{if}\ \ga\ \text{is good} \\
0 & \text{otherwise}.
\end{cases}
\end{equation*}
The result now follows immediately from Lemma \ref{lemma:key}.
\end{proof}

\begin{remark}
\label{rmk:nonhyp2}
As noticed in Remark \ref{rmk:elliptic}, $\ga_{r+1}$ has to be elliptic.
\end{remark}

The final periodic orbit needed to finish the proof of Theorem~\ref{MainThm} is given by the following proposition.

\begin{proposition}
\label{prop:step3}
There exists a simple periodic orbit $\gamma_0 \in \Pp$ such that $\gamma_0 \ne \gamma_j$ for all $1\leq j \leq r+1$.
\end{proposition}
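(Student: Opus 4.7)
The proof of Proposition~\ref{prop:step3} mirrors that of Proposition~\ref{prop:step2}, with the bottom degree $2N-n$ replacing the top degree $2N+n$. Arguing by contradiction, suppose $\Pp=\{\ga_1,\dots,\ga_{r+1}\}$. The strategy is to establish the following ``lower'' analog of Lemma~\ref{lemma:key}: for the same refined choices of $N_0$, $\q$, and $\epsilon$ in the CIJT used in Propositions~\ref{prop:step1} and~\ref{prop:step2}, there exists $j\in J$ with $HC_{2N-n}(\ga_j^{2m_j})\neq 0$. In the non-degenerate case this means $\cz(\ga_j^{2m_j})=2N-n$; since $\cz(\ga_l^{2m_l})\in[2N-n+1,2N+n-1]$ for $l=1,\dots,r$ (by the proof of Proposition~\ref{prop:step1}) and $\cz(\ga_{r+1}^{2m_{r+1}})=2N+n$ (by Proposition~\ref{prop:step2}), this $\ga_j$ cannot coincide with any of $\ga_1,\dots,\ga_{r+1}$ and therefore furnishes the sought simple orbit $\ga_0$.

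To prove the analog, I would mimic the proof of Lemma~\ref{lemma:key}. Since $\alpha$ is non-degenerate, $\nu(\ga_j)=S^+_j(1)=0$, so~\eqref{eq:CIJT2} yields $\cz(\ga_j^{2m_j-1})=2N-\cz(\ga_j)\leq 2N-n$, and by monotonicity~\eqref{eq:indexmon} one gets $\cz(\ga_j^m)\leq 2N-n$ for every $m\leq 2m_j-1$; dually, \eqref{eq:CIJT3} together with~\eqref{eq:indexmon} gives $\cz(\ga_j^m)\geq 2N+n$ for every $m\geq 2m_j+1$. Assuming for contradiction that $\cz(\ga_j^{2m_j})\geq 2N-n+1$ for every $j$, the iterates contributing to $HC^0_k(\xi)$ with $k\in[n,2N-n]$ are exactly the $\ga_j^m$ with $m\leq 2m_j-1$, which yields, via the identity $\sum_{m=1}^{2m_j}\chi(\ga_j^m)=2m_j\hat\chi(\ga_j)$ and the sublemma from Lemma~\ref{lemma:key},
\[
\sum_{k=n}^{2N-n}(-1)^k c_k=\sum_j\sum_{m=1}^{2m_j-1}\chi(\ga_j^m)=2N\chi^0(\xi)-\sum_j\chi(\ga_j^{2m_j}).
\]
Using~\eqref{eq:CH} (with $\N=\{1,2,\dots\}$) one computes $\sum_{k=n}^{2N-n}(-1)^k b^0_k=(-1)^n[(2s-1)\chi(B)+1]$. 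Substituting into the Morse inequality of Proposition~\ref{prop:morseineq} at $k=2N-n-1$, and combining with the Morse inequality at $k=2N+n-1$ already exploited in Lemma~\ref{lemma:key} but now refined to account for the known contribution of $\ga_{r+1}^{2m_{r+1}}$ at degree $2N+n$, produces a sign contradiction analogous to~\eqref{eq:keyprop5}.

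The main obstacle is that, in contrast to the top-degree situation, lower iterates $\ga_j^m$ with $m<2m_j$ can in principle land at degree $2N-n$, precisely when $\cz(\ga_j)=n$. Hence the bookkeeping is more delicate than in Lemma~\ref{lemma:key}: one must track these possible contributions, use the already established presence of $\ga_{r+1}^{2m_{r+1}}$ at the top to pin down $\sum_{j\in J^+}\chi(\ga_j^{2m_j})$ where $J^+=\{j:\cz(\ga_j^{2m_j})=2N+n\}$, and then combine the Morse inequalities at $k=2N-n-1$, $k=2N-n$, and $k=2N+n-1$ to squeeze out the parity contradiction. The interaction of these three inequalities, together with the isomorphism~\eqref{eq:CH}, is what drives the proof.
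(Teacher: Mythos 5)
Your proposal takes a genuinely different route from the paper's and it has a real gap. The paper does not attempt a ``lower key lemma'' at degree $2N-n$. Instead it runs a case analysis on the generators of $HC^0_n(\xi)$ (which has rank one): if a unique good orbit $\gamma$ has $\cz(\gamma)=n$, the paper exploits that $\dim HC^0_{2N-n}(\xi)=2$ when $c_B = n$, shows via index monotonicity and an inner lemma ($\cz(\gamma_0^{2m_0-2})<\cz(\gamma_0^{2m_0-1})$) that at most one iterate of the $\gamma_j$'s can land at degree $2N-n$, and derives a contradiction; if there are two or more good orbits at degree $n$, it instead produces a good orbit $\tilde\gamma$ at degree $n+1$ whose index has the wrong parity to be an iterate of any $\gamma_j$.

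The concrete problem with your strategy is that the Morse inequality at $k=2N-n$ does not yield a contradiction under your hypothesis $\cz(\gamma_j^{2m_j})\geq 2N-n+1$ for all $j\in J$. Carrying out your computation, the Morse inequality reads
\[
(-1)^n\sum_{j\in J}\chi(\gamma_j^{2m_j}) \;\leq\; \chi(B)-1 \;=\; r_B - 1,
\]
and in the scenario you must exclude, namely $\Pp=\{\gamma_1,\dots,\gamma_{r+1}\}$ with each $\gamma_j^{2m_j}$ good and of index with parity $n$ (which is forced since $H_{\mathrm{odd}}(B;\Q)=0$), the left-hand side equals exactly $r+1 = r_B - 1$. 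The inequality is saturated, not violated. You flag this obstacle yourself, but the deferred ``delicate combination'' of Morse inequalities at $k=2N-n-1$, $2N-n$, $2N+n-1$ is never carried out, and it is doubtful it can be: in the paper's Case $2$, the two generators of $HC^0_{2N-n}(\xi)$ can be supplied by the odd iterates $\gamma^{2m-1}$, $\gamma'^{2m'-1}$ of the two orbits with $\cz=n$, with no $\gamma_j^{2m_j}$ landing at $2N-n$ at all, so your claimed ``lower key lemma'' is simply not true in that case. The missing orbit must then be found at degree $n+1$ by a parity argument, which your proposal does not contain.
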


\begin{proof}
Since $HC^0_{n} (\xi) \ne 0$, there exists at least one good periodic orbit with index $n$. Let us consider two separate cases.

\vskip .3cm
\noindent {\bf Case 1.} There exists a unique good periodic orbit $\gamma$ such that $\cz(\gamma) = n$.
\vskip .2cm

Let $\ga_0$ be the underlying simple closed orbit of $\ga$. By dynamical convexity and \eqref{eq:indexmon}, $\cz(\gamma_0)=n$ $\implies$ $\ga_0=\ga$ (observe that $\ga_0$ is good because $\ga$ is good). Suppose that $\gamma_0 = \gamma_p$ for some $p\in\{1,\ldots,r+1\}$. Then $\cz(\gamma_j) > n$ for all $j\in\{1,\ldots,r+1\}\setminus\{p\}$ (notice that $\ga_1,\dots,\ga_{r+1}$ are good), and it follows from our assumptions and the CIJT that these $\gamma_j$'s cannot contribute to $HC^0_{2N-n} (\xi)$. Since $\cz (\gamma_0^{2m_0}) = \cz (\gamma_p^{2m_p}) \geq 2N-n+1$, only $\gamma_0^m$ with $m\leq 2m_0 -1$ can contribute to $HC^0_{2N-n} (\xi)$.

\begin{lemma} 
\[
\cz (\gamma_0^{2m_0 -2}) < \cz (\gamma_0^{2m_0 -1})\,.
\]
\end{lemma}

\begin{proof}
Since $\gamma_0$ is non-degenerate and $\cz(\ga_0) \geq n$, there are non-negative integers $a$ and $b$ such that
\begin{equation}\label{eq:2star}
\cz (\gamma_0^m) = m a + \sum_{i=1}^b 2 \left\lfloor \frac{m\theta_i}{2\pi}\right\rfloor + b \,,
\end{equation}
where each $\theta_i \in (0,2\pi)$ is the argument of an eigenvalue of the linearized Poincar\'e map of $\gamma_0$ with absolute value equal to $1$; see~\cite{Lon02}. 

If $a>0$ we are done. So suppose that $a=0$. Since $\cz (\gamma_0^2) > n$ (note that every even iterate of $\ga_0=\ga_p$ is good), there exists at least one $\theta_i$ such that $\theta_i \geq \pi$ and the non-degeneracy of $\gamma_0^2$ implies that, in fact, $\theta_i > \pi$. 

By Remark \ref{rmk:CIJT}, given an arbitrary $\delta > 0$ the number $m_0\in\N$ can be chosen such that
\[
\min \left\{ \frac{m_0 \theta_i}{\pi} - \left\lfloor \frac{m_0 \theta_i}{\pi} \right\rfloor\,,\ 
1 - \left( \frac{m_0 \theta_i}{\pi} - \left\lfloor \frac{m_0 \theta_i}{\pi}\right\rfloor\right)\right\} < \delta\,.
\]
We claim that if
\[
\delta < \min \left\{ \frac{\theta_i}{\pi} -1 , 1 - \frac{\theta_i}{2\pi} \right\}
\]
then there exists $c\in\Z$ such that
\begin{equation} \label{eq:c}
\frac{(2m_0 - 2)\theta_i}{2\pi} < c < \frac{(2m_0 - 1)\theta_i}{2\pi}\,,
\end{equation}
which finishes the proof (using~(\ref{eq:2star}) and $\theta_i > \pi$). To prove~(\ref{eq:c}) note that
\begin{eqnarray}
\theta_i > \pi & \Rightarrow & \frac{m_0 \theta_i}{\pi}   -\frac{\theta_i}{\pi}  <
\left\lfloor \frac{m_o \theta_i}{\pi}\right\rfloor \label{first}\\
\text{and}\quad \theta_i < 2\pi & \Rightarrow & \left\lfloor \frac{m_o \theta_i}{\pi}\right\rfloor - 1 < 
\frac{m_0 \theta_i}{\pi} - \frac{\theta_i}{2\pi}\,,  \label{second}
\end{eqnarray}
while the choice for $\delta$ implies that at least one of the following inequalities is true:
\begin{equation} \label{third}
\frac{m_0 \theta_i}{\pi} - \left\lfloor \frac{m_0 \theta_i}{\pi} \right\rfloor <  \frac{\theta_i}{\pi} -1
\quad\text{or}\quad
1 - \left( \frac{m_0 \theta_i}{\pi} - \left\lfloor \frac{m_0 \theta_i}{\pi}\right\rfloor\right) < 1 - \frac{\theta_i}{2\pi} \,.
\end{equation}
Combining~(\ref{first}) with the second inequality of~(\ref{third}), we get
\[
\frac{m_0\theta_i}{\pi} - \frac{\theta_i}{\pi} <  \left\lfloor \frac{m_0 \theta_i}{\pi} \right\rfloor <
\frac{m_0\theta_i}{\pi} - \frac{\theta_i}{2\pi} 
\]
which implies~(\ref{eq:c}) with $c =  \left\lfloor \frac{m_0 \theta_i}{\pi} \right\rfloor$. Combining~(\ref{second}) with the first inequality of~(\ref{third}), we get
\[
\frac{m_0\theta_i}{\pi} - \frac{\theta_i}{\pi} <  \left\lfloor \frac{m_0 \theta_i}{\pi} \right\rfloor - 1 <
\frac{m_0\theta_i}{\pi} - \frac{\theta_i}{2\pi} 
\]
which implies~(\ref{eq:c}) with $c =  \left\lfloor \frac{m_0 \theta_i}{\pi} \right\rfloor - 1$. This finishes the proof of the lemma.
\end{proof}

Thus, only $\gamma_0^{2m_0 -1}$ can contribute to $HC^0_{2N-n} (\xi)$ which would then have dimension at most one. But  by our choice of $N$ and \eqref{eq:CH},
\[
\dim HC_{2N-n} (\xi) = \dim H_0(B;\Q)\oplus H_{2n} (B;\Q) = 2 \,,
\]
which is a contradiction. Hence, in this Case $1$ we do have that $\gamma_0 \ne \gamma_j$ for all $1\leq j \leq r+1$.

\begin{remark}
\label{rmk:nonhyp3}
Using \eqref{eq:CIJT4} and the fact that $\cz(\ga_{0}^{2m_0})=2N-n$ we infer that $e(\ga_{0})=2n$, that is, $\ga_{0}$ is elliptic.
\end{remark}

\vskip .3cm
\noindent {\bf Case 2.} There are at least two distinct good periodic orbits $\gamma$ and  $\gamma'$ such that $\cz (\gamma) = \cz (\gamma') = n$.
\vskip .2cm

In this case, the fact that $\dim HC^0_{n} (\xi) = \dim H_0 (B;\Q) = 1$ implies that there exists a good periodic orbit $\tgamma$ such that $\cz (\tgamma) = n +1$. Notice that $\tgamma$ cannot be an iterate of any $\gamma_j$, $1\leq j\leq r+1$, because the index of all iterates of all these $\gamma_j$ have the same parity as $n$. Indeed, $\gamma_j^{2m_j}$ contributes to $HC^0_\ast (\xi)$ and $HC^0_k (\xi) = 0$ for every $k$ with parity different from $n$, since we are assuming that $H_k(B;\Q) = 0$ whenever $k$ is odd. Hence, in this Case 2 we can take $\gamma_0$ to be the simple periodic orbit underlying $\tgamma$.

\begin{remark}
\label{rmk:nonhyp4}
Suppose that $H_k(B)=0$ for every odd $k$ and that $n\geq 3$ is odd. We claim that the existence of $\tgamma$ implies the existence of a simple non-hyperbolic periodic orbit $\psi$ different from $\ga_1,\dots,\ga_{r+1}$ (which may coincide with $\ga_0$). Indeed, suppose that $\ga_0$ is hyperbolic (otherwise we are done) and let $m_0$ be the number associated to $\ga_0$ given by the CIJT applied to $\ga_0,\ga_1,\dots,\ga_{r+1}$. Since $\ga_0$ is hyperbolic, we have by \eqref{eq:CIJT4} and \eqref{eq:CIJT5} that $\cz(\ga_0^{2m_0}) = 2N$. Moreover, $\cz(\tgamma) = \cz(\ga_0^j) = j\cz(\ga_0)$ for some $j \in \N$. But $\cz(\tgamma) = n+1$ and $\cz(\ga_0) \geq n$ $\implies$ $j=1$ (note that $\ga_0$ is good because $\tgamma$ is good). Thus, $\cz(\ga_0)=n+1$ is even $\implies$ $\ga_0^{2m_0}$ is good. By our hypotheses, $HC^0_{2N}(\xi)=0$ and therefore we must have a simple closed orbit $\psi$ such that $\cz(\psi^k)=2N+1$ for some $k \in \N$. By the CIJT and the fact that $n\geq 3$, we can assume that $k$ is even and $\psi$ is necessarily distinct from $\ga_1,\dots,\ga_{r+1}$. Thus, $\psi$ is non-hyperbolic, as claimed.
\end{remark}

\begin{remark}
\label{rmk:weak convexity 2}
It is easy to see from the proof of Assertion B in Theorem \ref{MainThm} that one can slightly relax the hypothesis of dynamical convexity to prove Assertion B in Theorem \ref{thm:weak convexity}. As a matter of fact, Propositions \ref{prop:step1} and \ref{prop:step2} give us simple periodic orbits $\ga_1,\dots,\ga_{r+1}$ such that $\ga_i^{2m_i}$ is good and $\ga_i^{2m_i} \in \{2N-n+2,2N-n+4,\dots,2N+n\}$ for every $i \in \{1,\dots,r+1\}$. Moreover, $\#\Pp \geq \dim H_\ast (B, \Q)$ if every good closed orbit $\ga$ satisfies $\cz(\ga)> n$. Therefore, we can assume that there exists a good simple closed orbit $\bar\ga$ such that $\cz(\bar\ga)=n$. If $I=2n$ then Proposition \ref{prop:step3} gives us a new simple orbit. If $I>2n$ then, since $HC^0_n(\xi)=0$, the presence of $\bar\ga$ implies the existence of a good contractible closed orbit $\ga$ such that $\cz(\ga)=n+1$. Since the indexes of every iterate of $\ga_1,\dots,\ga_{r+1}$ have the same parity of $n$, $\ga$ must be geometrically distinct from $\ga_1,\dots,\ga_{r+1}$.
\end{remark}

\begin{remark}
\label{rmk:negative convexity}
The proof of Theorem \ref{MainThm} can be readily adapted to the case that $\alpha$ is negatively dynamically convex. In order to check this, the main points are the following. Let $\{\ga_1,\dots,\ga_q\}$ be a set of periodic orbits with negative mean index. Applying the CIJT to the inverse of the linearized Reeb flow along the periodic orbits and using \eqref{eq:czl x czu 1} we get natural numbers $N,m_1,\dots,m_j$ such that
\[
\czu(\gamma_j^{2m_j-1}) = -2N - \czu(\gamma_j) + 2S_j^+(1),
\]
\[
\czu(\gamma_j^{2m_j+1}) = -2N + \czu(\gamma_j),
\]
\[
\czu(\gamma_j^{2m_j}) \leq -2N + \frac{e(\ga_j)}{2} \leq -2N + n,
\]
and
\[
\czu(\gamma_j^{2m_j}) - \nu(\gamma_j^{2m_j}) \geq -2N - \frac{e(\ga_j)}{2} \geq -2N - n,
\]
for every $j \in \{1,\dots,q\}$, where $S_j^+(1)$ is the splitting number of the eigenvalue one of the inverse of the linearized Poincar\'e map of $\ga_j$. Using \eqref{eq:czl x czu 1} and \eqref{eq:indexest} we conclude that if $\czu(\ga) \leq -n$ then
\begin{align*}
\czu(\ga^{m+1}) & \leq \czu(\ga^m) - \nu(\ga^m) + \frac{e(\ga)}{2} - n \\
& \leq \czu(\ga^m) - \nu(\ga^m)
\end{align*}
for every $m \in \N$. Moreover, it follows from \eqref{eq:czl x czu 2} and \eqref{eq:support CH} that the local contact homology $HC_*(\ga)$ is supported in the range of degrees $[\czu(\ga)-\nu(\ga),\czu(\ga)]$. Finally, in the proof of Assertion A we conclude that, instead of an SDM, $\ga$ must be an SDMin. 
\end{remark}
\end{proof}

\section{Proofs of Theorems \ref{thm:elliptic} and \ref{thm:non-hyperbolic}}
\label{sec:proof elliptic&hyperbolic}

\subsection{Proof of Theorem \ref{thm:elliptic}}
Follows immediately from Lemma \ref{lemma:key} and Remark \ref{rmk:elliptic}.

\subsection{Proof of Theorem \ref{thm:non-hyperbolic}}
When $n=1$, $B=S^2$, $M$ is $S^3$ or a lens space and $k^0_-=3$. Then we have from Proposition \ref{prop:step1} that there are two geometrically distinct contractible closed orbits $\ga_1,\ga_2$ such that $\cz(\ga_1^{2m_1})=2N-1$ and $\cz(\ga_2^{2m_2})=2N+1$ which implies that these orbits are not hyperbolic. Therefore, we can assume that $n\geq 3$. The theorem then follows from the proof of Theorem \ref{MainThm} and Remarks \ref{rmk:nonhyp1}, \ref{rmk:nonhyp2}, \ref{rmk:nonhyp3} and \ref{rmk:nonhyp4}.

\section{Proof of Theorem \ref{thm:perfect}}
\label{sec:proof perfect}

Since $\alpha$ is perfect, every good periodic orbit of $\alpha$ has index bigger than or equal to $k_- \geq n$. Moreover, $\alpha$ carries precisely one good periodic orbit with index $k_-$. Indeed, if we have more than one good periodic orbit with index $k_-$ then we would have that the rank of $HC_{k_-}(\xi) \cong \Q$ is bigger than one, a contradiction.

Therefore, by the proof of  Assertion B in Theorem \ref{MainThm}, we conclude that $\alpha$ carries at least $r_B$ simple periodic orbits $\ga_1,\dots,\ga_{r_B}$ such that $\ga^{2m_1}_1,\dots,\ga^{2m_{r_B}}_{r_B}$ are good $\implies$ $\ga_1,\dots,\ga_{r_B}$ are even.

We claim that $\ga_1,\dots,\ga_{r_B}$ are the only simple even periodic orbits of $\alpha$. As a matter of fact, suppose that we have another simple even periodic orbit $\ga_{r_B+1}$. Since $\ga_{r_B+1}$ is good and $\alpha$ is non-degenerate, $\ga_{r_B+1}$ must have positive mean index which enables us to apply the CIJT to the orbits $\ga_1,\dots,\ga_{r_B+1}$. If $k_->n$ then the proof of Theorem \ref{MainThm} shows that the periodic orbits $\ga_1^{2m_1},\dots,\ga_{r_B}^{2m_{r_B}}$ generate $HC_{2N-n}(\xi) \oplus \dots \oplus HC_{2N+n}(\xi)$. Since $2N-n \leq \cz(\ga_{r_B+1}^{2m_{r_B+1}}) \leq 2N+n$, $\ga_{r_B+1}$ is even and $\alpha$ is perfect, we would conclude that the rank of $HC_{2N-n}(\xi) \oplus \dots \oplus HC_{2N+n}(\xi)$ is bigger than $r_B$, a contradiction. So suppose that $k_-=n$ and assume, without loss of generality, that $\ga_1$ has index equal to $k_-$. By the proof of Theorem \ref{MainThm}, the periodic orbits $\ga_1^{2m_1-1}$, $\ga_1^{2m_1}$, $\ga_1^{2m_1+1}$ and $\ga_2^{2m_2},\dots,\ga_{r_B}^{2m_{r_B}}$ generate $HC_{2N-n}(\xi) \oplus \dots \oplus HC_{2N+n}(\xi)$. Since $2N-n \leq \cz(\ga_{r_B+1}^{2m_{r_B+1}}) \leq 2N+n$, $\ga_{r_B+1}$ is even and $\alpha$ is perfect, we would conclude that the rank of $HC_{2N-n}(\xi) \oplus \dots \oplus HC_{2N+n}(\xi)$ is bigger than $r_B+2$, getting again a contradiction.


\begin{thebibliography}{aa}

\bibitem{AM} M. Abreu, L. Macarini, {\em Dynamical convexity and elliptic periodic orbits for Reeb flows.} Preprint arXiv:1411.2543.

\bibitem{Bo} F. Bourgeois, {\em A Morse-Bott approach to contact homology}, in ``Symplectic and Contact Topology: Interactions and Perspective'' 
(eds. Y.Eliashberg, B.Khesin and F.Lalonde), Fields Institute Communications {\bf 35}, American Mathematical Society (2003), 55--77.

\bibitem{BCE} F. Bourgeois, K. Cieliebak, T. Ekholm, {\em A note on Reeb dynamics on the tight 3-sphere.} J. Mod. Dyn. {\bf 1} (2007), no. 4, 597Ð-613.

\bibitem{BO} F. Bourgeois, A. Oancea, {\em $S^1$-equivariant symplectic homology and linearized contact homology.} Preprint arXiv:1212.3731. To appear in International Mathematics Research Notices.

\bibitem{CGH} D. Cristofaro-Gardiner, M. Hutchings, {\em From one Reeb orbit to two.} J. Diff. Geom. {\bf 102} (2016), 25--36.

\bibitem{DL1} H. Duan, Y. Long, {\em Multiple closed geodesics on bumpy Finsler $n$-spheres.} J. Differential Equations {\bf 233} (2007), no. 1, 221-Ð240.

\bibitem{DL2} H. Duan, Y. Long, {\em The index quasi-periodicity and multiplicity of closed geodesics.} Preprint arXiv:1008.1458.

\bibitem{EGH} Y.~Eliashberg, A.~Givental and H.~Hofer, {\em Introduction to Symplectic Field Theory.} Geom. Funct. Anal., Special volume, Part II (2000), 560--673.

\bibitem{EH} I. Ekeland, H. Hofer, {\em Convex Hamiltonian energy surfaces and their periodic trajectories.}
Comm. Math. Phys. {\bf 113} (1987), no. 3, 419--469.

\bibitem{Gin} V. Ginzburg, {\em The Conley conjecture.} Ann. of Math. (2) {\bf 172} (2010), no. 2, 1127Ð-1180.

\bibitem{GHHM} V. Ginzburg, D. Hein, U. Hryniewicz, L. Macarini, {\em Closed Reeb orbits on the sphere and symplectically degenerate maxima.} Acta Math. Vietnam. {\bf 38} (2013), no. 1, 55Ð-78.

\bibitem{GGo} V. Ginzburg, Y. Goren, {\em Iterated index and the mean Euler characteristic.} J. Topol. Anal. {\bf 7} (2015), no. 3, 453Ð-481.

\bibitem{GG11} V. Ginzburg, B. Gurel, {\em Conley conjecture for negative monotone symplectic manifolds.} Int. Math. Res. Not. IMRN {\bf 8} (2012), 1748--1767.

\bibitem{GG14} V. Ginzburg, B. Gurel, {\em The Conley Conjecture and Beyond.} Arnold Math J. {\bf 1} (2015), 299--337.

\bibitem{GGM} V. Ginzburg, B. Gurel, L. Macarini, {\em On the Conley conjecture for Reeb flows.}  Internat. J. Math. {\bf 26} (2015), 1550047 (22 pages); doi: 10.1142/S0129167X15500470.

\bibitem{GM} D. Gromoll and W.Meyer, {\em Periodic geodesics on compact Riemannian manifolds.} J. Differential Geometry {\bf 3} (1969), 493--510.

\bibitem{Gur} B. Gurel, {\em Perfect Reeb flows and action-index relations.} Geom. Dedicata {\bf 174} (2015), 105Ð-120.

\bibitem{Gut} J. Gutt, {\em The positive equivariant symplectic homology as an invariant for some contact manifolds.} Preprint arXiv:1503.01443.

\bibitem{GK} J. Gutt, J. Kang, {\em On the minimal number of periodic orbits on some hypersurfaces in $\R^{2n}$.} Preprint arXiv:1508.00166.

\bibitem{Hin} N. Hingston, {\em Subharmonic solutions of Hamiltonian equations on tori.} Ann. of Math. (2) {\bf 170} (2009), no. 2, 529Ð-560.

\bibitem{HWZ} H. Hofer, K. Wysocki and E. Zehnder, {\em The dynamics on three-dimensional strictly convex energy surfaces.} Ann. of Math. (2) {\bf 148} (1998), no. 1, 197Ð-289.

\bibitem{HWZ10} H. Hofer, K. Wysocki, E. Zehnder, {\em SC-smoothness, retractions and new models for smooth spaces.} Discrete Contin. Dyn. Syst., \textbf{28} (2010), 665--788.

\bibitem{HWZ11} H. Hofer, K. Wysocki, E. Zehnder, {\em Applications of polyfold theory I: The Polyfolds of Gromov--Witten Theory.} Preprint arXiv:1107.2097.

\bibitem{HWZ14} H. Hofer, K. Wysocki, E. Zehnder, {\em Polyfold and Fredholm Theory I: Basic Theory in M-Polyfolds} Preprint  arXiv:1407.3185.

\bibitem{HM} U. Hryniewicz, L. Macarini, {\em Local contact homology and applications.}  J. Topol. Anal. {\bf 7} (2015), no. 2, 167-Ð238.

\bibitem{Lon99} Y. Long. \textit{Bott formula of the {M}aslov-type index theory.} Pacific J. Math. {\bf 187} (1999), 113--149.

\bibitem{Lon02} Y. Long. {Index theory for symplectic paths with applications.} Birkh\"auser, Basel, 2002.

\bibitem{LZ} Y. Long, C. Zhu, {\em Closed characteristics on compact convex hypersurfaces in $\R^{2n}$.} Ann. of Math. {\bf 155} (2002), no. 2, 317--368.

\bibitem{MS} D. McDuff, D. Salamon, {J-holomorphic curves and symplectic topology.} American Mathematical Society Colloquium Publications, 52. American Mathematical Society, Providence, RI, 2012.

\bibitem{McL} M. McLean, {\em Local Floer homology and infinitely many simple Reeb orbits.} Algebr. Geom. Topol. {\bf 12} (2012), no. 4, 1901Ð-1923.

\bibitem{Par} J. Pardon, {\em Contact homology and virtual fundamental cycles.} Preprint  arXiv:1508.03873.

\bibitem{Rab1} P. Rabinowitz, {\em Periodic solutions of Hamiltonian systems.} Comm. Pure Appl. Math. {\bf 31} (1978), no. 2, 157--184.

\bibitem{Rab2} P. Rabinowitz, {\em A variational method for finding periodic solutions of differential equations.} Nonlinear evolution equations (Proc. Sympos., Univ. Wisconsin, Madison, Wis., 1977), pp. 225Ð-251, Publ. Math. Res. Center Univ. Wisconsin, 40, Academic Press, New York-London, 1978.

\bibitem{Rab3} P. Rabinowitz, {\em Periodic solutions of a Hamiltonian system on a prescribed energy surface.} J. Differential Equations {\bf 33} (1979), no. 3, 336Ð-352.

\bibitem{Rad} H. Rademacher, {\em The second closed geodesic on Finsler spheres of dimension $n>2$.} Trans. Amer. Math. Soc. {\bf 362} (2010), no. 3, 1413Ð-1421.

\bibitem{RS} J.~Robbin, D.~Salamon, {\em The Maslov index for paths.} Topology {\bf 32} (1993), 827--844. 

\bibitem{SZ} D. Salamon, E. Zehnder, {\em Morse theory for periodic solutions of Hamiltonian systems and the Maslov index.} Comm. Pure Appl. Math. {\bf 45} (1992), no. 10, 1303Ð-1360.

\bibitem{Tau} C. Taubes, {\em The Seiberg-Witten equations and the Weinstein conjecture.} Geom. Topol. {\bf 11} (2007), 2117-Ð2202. 

\bibitem{Wa0} W. Wang, {\em Closed geodesics on positively curved Finsler spheres.} Adv. Math. {\bf 218} (2008), no. 5, 1566--1603.

\bibitem{Wa1} W. Wang, {\em On a conjecture of Anosov.} Adv. Math. {\bf 230} (2012), 1597--1617.

\bibitem{Wa2} W. Wang, {\em Non-hyperbolic closed geodesics on Finsler spheres.} J. Diff. Geom. {\bf 99} (2015), 473--496.

\bibitem{Wei} A. Weinstein, {\em On the hypotheses of Rabinowitz' periodic orbit theorems.} J. Differential Equations {\bf 33} (1979), no. 3, 353-Ð358.

\bibitem{Zil} W. Ziller, {\em Geometry of the Katok examples.} Ergodic Theory Dynam. Systems {\bf 3} (1983), no. 1, 135--157. 

\end{thebibliography}
\end{document}